\theoremstyle{plain}
\newtheorem{theorem}{Theorem}
\newtheorem{lemma}[theorem]{Lemma}
\newtheorem{corollary}[theorem]{Corollary}
\newtheorem{proposition}[theorem]{Proposition}
\theoremstyle{definition}
\begin{document}

\title{Mahler takes a regular view of Zaremba}

\author{Michael Coons}
\address{School of Math.~and Phys.~Sciences\\
University of Newcastle\\
Callaghan\\
Australia}
\email{Michael.Coons@newcastle.edu.au}

\thanks{The research of M.~Coons was partially supported by ARC grant DE140100223.}

\date{\today}

\begin{abstract} In the theory of continued fractions, Zaremba's conjecture states that there is a positive integer $M$ such that each integer is the denominator of a convergent of an ordinary continued fraction with partial quotients bounded by $M$. In this paper, to each such $M$ we associate a regular sequence---in the sense of Allouche and Shallit---and establish various properties and results concerning the generating function of the regular sequence. In particular, we determine the minimal algebraic relation concerning the generating function and its Mahler iterates. 
\end{abstract} 

\maketitle

\section{Introduction}

For all $x\in(0,1)$ we write the ordinary continued fraction expansion of $x$ as $$x=[a_1,a_2,a_3,\ldots]=\cfrac{1}{a_1+\cfrac{1}{a_2+\cfrac{1}{a_3+\cfrac{1}{\ddots}}}},$$ where the positive integers $a_1,a_2,a_3,\ldots,$ are the partial quotients of $x$. The convergents of the number $x$ are the rationals $p_n/q_n:=[a_1,\ldots,a_n]$ and can be computed using the definition or by the well-known relationship $$\left(\begin{matrix} a_n&1\\ 1&0\end{matrix}\right)\cdots\left(\begin{matrix} a_2&1\\ 1&0\end{matrix}\right)\left(\begin{matrix} a_1&1\\ 1&0\end{matrix}\right)=\left(\begin{matrix} q_n&p_{n}\\ q_{n-1}&p_{n-1}\end{matrix}\right).$$ 
See the monograph {\em Neverending Fractions} by Borwein, van der Poorten, Shallit and Zudilin \cite{BvdPSZ} for details and properties regarding continued fractions.

Denote by $\mathcal{B}_k$ the set of real numbers $x\in(0,1)$ all of whose partial quotients are bounded above by $k$. In the early 1970s, Zaremba \cite{Z1972} conjectured that {\em there is a positive integer $k$ such that the set of denominators of the convergents of the elements of $\mathcal{B}_k$ is $\mathbb{N}$.}

While in generality, Zaremba's conjecture remains open, there has been a lot of progress recently. Bourgain and Kontorovich \cite{BK2014} proved that the set of denominators of $\mathcal{B}_{50}$ has full density in $\mathbb{N}$; this was improved by Huang \cite{H2015}, who proved the analogous result for $\mathcal{B}_5$. It is not the purpose of this paper to improve upon the results of Bougain and Kontorvich and Hunag, but to view the denominators of the convergents in a new way using the framework of regular sequences introduced by Allouche and Shallit \cite{AS1992}. 

An integer-valued sequence $\{f(n)\}_{n\geqslant 0}$ is {\em $k$-regular} provided there exist a positive integer $d$, a finite set of matrices $\{{\bf A}_0,\ldots,{\bf A}_{k-1}\}\subseteq \mathbb{Z}^{d\times d}$, and vectors ${\bf v},{\bf w}\in \mathbb{Z}^d$ such that $$f(n)={\bf w}^T {\bf A}_{i_0}\cdots{\bf A}_{i_s} {\bf v},$$ where $(n)_k={i_s}\cdots {i_0}$ is the base-$k$ expansion of $n$; see Allouche and Shallit \cite[Lemma~4.1]{AS1992}. The notion of $k$-regularity is a direct generalisation of automaticity; in fact, a $k$-regular sequence that takes finitely many values can be output by a deterministic finite automaton. We call the generating function $F(z)=\sum_{n\geqslant 0}f(n)z^n$ of a $k$-regular sequence $\{f(n)\}_{n\geqslant 0}$, a {\em $k$-regular function} (or just {\em regular}, when the $k$ is understood). 

We associate the denominators of elements in $\mathcal{B}_k$ to the $k$-regular sequence $\{\kappa(n)\}_{n\geqslant 0}$ defined by \begin{equation}\label{def:kappa}\kappa(n):={\bf w}^T {\bf A}_w {\bf v},\end{equation} where ${\bf w}={\bf v}^T=[1\ 0]$ and for $i=0,1,\ldots,k-1$ $${\bf A}_i:=\left(\begin{matrix} i+1&1\\ 1&0\end{matrix}\right).$$ Here $w\in 1\{0,1\}^*$ corresponds to the reversal of the base-$k$ expansion of $n$; that is, $w=i_0i_1\cdots i_s,$ when $(n)_k=i_s\cdots i_1 i_0.$ The set of values of $\{\kappa(n)\}_{n\geqslant 0}$ are exactly the set of denominators of elements of $\mathcal{B}_k$.

Like all generating functions of regular sequences (see Becker \cite{pgB1994}), the generating function $K(z):=\sum_{\geqslant 0}\kappa(n)z^n$ satisfies a Mahler-type functional equation. Our main results focus on the function $K(z)$. In particular, we start by obtaining the exact functional equation.

\begin{theorem}\label{PkMFE} Let $K(z)$ be as defined above. Then \begin{equation}\label{MFE}K(z)-\left(\sum_{a=0}^{k-1}(a+1)z^a\right)K(z^k)-\left(\sum_{a=0}^{k^2-1}z^a\right)K(z^{k^2})=-\sum_{n=0}^{k-1}z^n.\end{equation} 
\end{theorem}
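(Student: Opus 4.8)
The plan is to lift $\kappa$ to a vector-valued sequence, derive a matrix Mahler equation for it, and then eliminate the auxiliary coordinate to recover the scalar equation for $K(z)$.

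First I would set, for $n\geqslant 1$ with base-$k$ expansion $(n)_k=i_s\cdots i_1 i_0$ (so $i_s\neq 0$) and reversal $w=i_0 i_1\cdots i_s$, the vector $g(n):=\mathbf{A}_w\mathbf{v}=\mathbf{A}_{i_0}\cdots\mathbf{A}_{i_s}\mathbf{v}\in\mathbb{Z}^2$, together with $g(0):=\mathbf{v}=[1\ 0]^T$ (the empty-word value). Then $\kappa(n)=\mathbf{w}^Tg(n)$ is simply the first coordinate of $g(n)$. Writing $G(z)=\sum_{n\geqslant 0}g(n)z^n=[K(z)\ L(z)]^T$, where $L(z)$ is the generating function of the second coordinate, the task is reduced to finding the functional equation for the first coordinate of $G$.

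The key observation is the digit recursion: for $0\leqslant a\leqslant k-1$ the base-$k$ expansion of $kn+a$ is that of $n$ with the digit $a$ appended, so $w(kn+a)=a\,w(n)$ and hence $g(kn+a)=\mathbf{A}_a\,g(n)$. This identity holds for every pair $(n,a)$ with $kn+a\geqslant 1$; the only exceptional pair is $(n,a)=(0,0)$, since $g(0)=[1\ 0]^T$ while $\mathbf{A}_0g(0)=[1\ 1]^T$. Splitting $G(z)$ by residues modulo $k$ as $G(z)=\sum_{a=0}^{k-1}z^a\sum_{n\geqslant 0}g(kn+a)(z^k)^n$ and applying the recursion (correcting the $n=a=0$ contribution by the constant vector $\mathbf{A}_0g(0)-g(0)=[0\ 1]^T$) yields the matrix functional equation
\[
G(z)=\mathbf{M}(z)\,G(z^k)-\begin{pmatrix}0\\1\end{pmatrix},\qquad
\mathbf{M}(z):=\sum_{a=0}^{k-1}z^a\mathbf{A}_a=\begin{pmatrix}\sum_{a=0}^{k-1}(a+1)z^a & \sum_{a=0}^{k-1}z^a\\ \sum_{a=0}^{k-1}z^a & 0\end{pmatrix}.
\]
Reading off the two coordinates gives $K(z)=\bigl(\sum_{a=0}^{k-1}(a+1)z^a\bigr)K(z^k)+\bigl(\sum_{a=0}^{k-1}z^a\bigr)L(z^k)$ and $L(z)=\bigl(\sum_{a=0}^{k-1}z^a\bigr)K(z^k)-1$. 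Substituting $z\mapsto z^k$ into the second relation and then into the first, and using the elementary identity $\bigl(\sum_{a=0}^{k-1}z^a\bigr)\bigl(\sum_{b=0}^{k-1}z^{kb}\bigr)=\sum_{m=0}^{k^2-1}z^m$ (each $0\leqslant m\leqslant k^2-1$ has a unique representation $m=a+kb$ with $0\leqslant a,b\leqslant k-1$), one rearranges to obtain exactly \eqref{MFE}.

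The only real subtlety, and the single place care is needed, is the bookkeeping at $n=0$: because $0=k\cdot 0$ should not be "expanded", the recursion $g(kn+a)=\mathbf{A}_a g(n)$ fails precisely at $(n,a)=(0,0)$, and this failure is the source of the constant vector $[0\ 1]^T$ in the matrix equation and hence of the right-hand side $-\sum_{n=0}^{k-1}z^n$ of \eqref{MFE}. Everything else is a routine manipulation of formal power series together with the two displayed polynomial identities.
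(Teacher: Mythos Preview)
Your argument is correct, and it takes a genuinely different route from the paper's. The paper works entirely at the scalar level: it first establishes the second-order recurrence $\kappa(k^2n+kb+a)=(a+1)\kappa(kn+b)+\kappa(n)$ for $n\geqslant 1$ (by expanding $[1\ 0]\mathbf{A}_a\mathbf{A}_b$), splits $K(z)$ according to residues modulo $k^2$, and then carries out an explicit polynomial computation of the boundary contribution $\sum_{n=0}^{k^2-1}\kappa(n)z^n-\kappa(0)\sum_{j=0}^{k^2-1}z^j-(\sum_j(j+1)z^j)(\sum_n\kappa(n)z^{kn})$ using the initial values $\kappa(n)=n+1$ and $\kappa(ak+b)=(a+1)(b+1)+1$. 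Your approach instead lifts to the vector $g(n)=\mathbf{A}_w\mathbf{v}$, uses the first-order matrix recursion $g(kn+a)=\mathbf{A}_ag(n)$ with a single exceptional pair $(n,a)=(0,0)$, and then eliminates the auxiliary coordinate $L(z)$. This buys you much cleaner bookkeeping---one correction vector $[0\ 1]^T$ in place of a page of polynomial cancellations---and it is the natural, reusable template for passing from any linear representation of a $k$-regular sequence to its Mahler equation. What the paper's route offers in return is that the scalar recurrence $\kappa(k^2n+kb+a)=(a+1)\kappa(kn+b)+\kappa(n)$ is made explicit and is of independent interest for the sequence itself.
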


\noindent In order to study the sequence $\{\kappa(n)\}_{n\geqslant 0}$, we determine the asymptotics of the series $K(z)$ as $z$ radially approaches special points on the unit disk.

\begin{proposition}\label{asymp} Let $m\geqslant 0$ be an integer. As $z\to 1^-$, we have $$K(z^{k^m})=\frac{1}{\alpha_k^{m}}\cdot\frac{C(z)}{(1-z)^{\log_k\alpha_k}} \cdot(1+o(1)),$$ where $C(z)$ is a real-analytic nonzero oscillatory term dependent on $k$, which on the interval $(0,1)$ is bounded away from $0$ and $\infty$, $C(z)=C(z^k)$, and $$\alpha_k:=\frac{k(k+1)+k\sqrt{(k+1)^2+16}}{4}.$$ 
\end{proposition}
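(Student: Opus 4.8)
The plan is to run the standard analysis of Mahler functions near the fixed point $z=1$ of the maps $z\mapsto z^{k},z\mapsto z^{k^{2}}$: recast \eqref{MFE} as a linear functional equation for $z$ close to $1$, locate the dominant ``characteristic exponent'', and transport the resulting discrete Poincar\'e--Perron asymptotics back to the radial limit. Concretely, I would set $z=e^{-u}$ and $G(u):=K(e^{-u})$, so that Theorem~\ref{PkMFE} reads
\begin{equation*}
G(u)=b_{1}(u)\,G(ku)+b_{2}(u)\,G(k^{2}u)-r(u),\qquad u>0,
\end{equation*}
where $b_{1}(u)=\sum_{a=0}^{k-1}(a+1)e^{-au}$, $b_{2}(u)=\sum_{a=0}^{k^{2}-1}e^{-au}$ and $r(u)=\sum_{a=0}^{k-1}e^{-au}$ tend, exponentially fast as $u\to0^{+}$, to $\tfrac{k(k+1)}{2}$, $k^{2}$ and $k$. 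Since $\kappa(n)\geqslant1$, the function $G$ is positive, real-analytic and strictly decreasing on $(0,\infty)$ with $G(u)\to\infty$ as $u\to0^{+}$. The companion constant-coefficient recurrence $x_{j+2}=\tfrac{k(k+1)}{2}x_{j+1}+k^{2}x_{j}$ has characteristic polynomial $X^{2}-\tfrac{k(k+1)}{2}X-k^{2}$, whose roots are exactly $\alpha_{k}$ and $\mu_{k}:=\tfrac14\big(k(k+1)-k\sqrt{(k+1)^{2}+16}\big)$, with $\alpha_{k}\mu_{k}=-k^{2}$; since $\alpha_{k}\geqslant\tfrac{k(k+1)}{2}>k$, we have $|\mu_{k}|=k^{2}/\alpha_{k}<\alpha_{k}$, so $\alpha_{k}$ is the dominant root. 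Put $\theta:=\log_{k}\alpha_{k}$, so that $k^{\theta}=\alpha_{k}$ and the defining identity $\alpha_{k}^{2}=\tfrac{k(k+1)}{2}\alpha_{k}+k^{2}$ is precisely $\tfrac{k(k+1)}{2}k^{-\theta}+k^{2}k^{-2\theta}=1$; this is why $\theta$ controls $K$.

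Next I would analyse $G$ along geometric sequences. Evaluating the displayed equation at $u=k^{-j-2}v$ shows that, for fixed $v>0$, the sequence $w_{j}=w_{j}(v):=G(k^{-j}v)$ satisfies $w_{j+2}=b_{1}(k^{-j-2}v)\,w_{j+1}+b_{2}(k^{-j-2}v)\,w_{j}-r(k^{-j-2}v)$, a second-order linear recurrence whose coefficients converge to $\tfrac{k(k+1)}{2},k^{2},k$, uniformly for $v$ in compact sets and geometrically fast in $j$. By the Poincar\'e--Perron theory the homogeneous solution space is spanned by a solution of exact order $\alpha_{k}^{\,j}$ and an ``exceptional'' solution that is $o(\alpha_{k}^{\,j})$ and, because $\mu_{k}<0$, eventually alternates in sign, while a particular solution of the inhomogeneous recurrence stays bounded. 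Since $w_{j}(v)>0$ and $w_{j}(v)\to\infty$, neither the bounded particular solution nor the eventually sign-changing exceptional one can account for the growth, so the $\alpha_{k}$-component of $(w_{j}(v))$ has a positive coefficient; hence $A(v):=\lim_{j\to\infty}\alpha_{k}^{-j}w_{j}(v)$ exists and is positive. Moreover the convergence is uniform for $v$ in compact subsets of $(0,\infty)$ (the defect between the true recurrence and its limit being geometrically small and analytic in $v$), and since $\alpha_{k}^{-j}G(k^{-j}v)$ is real-analytic in $v$, the limit $A$ is real-analytic and positive on $(0,\infty)$, with $A(kv)=\alpha_{k}^{-1}A(v)$.

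Then I would assemble $C$. Set $\Psi(u):=u^{\theta}A(u)$; by the previous line $\Psi$ is real-analytic and positive on $(0,\infty)$ and satisfies $\Psi(ku)=\Psi(u)$, hence is bounded between two positive constants. For $u\in(0,1]$ write $u=k^{-n}v$ with $n=n(u)\to\infty$ and $v=v(u)\in[1,k)$; then $G(u)=\alpha_{k}^{\,n}\big(A(v)+o(1)\big)=\alpha_{k}^{\,n}A(v)(1+o(1))$ by the uniform convergence on $[1,k]$, while $1-e^{-u}=u(1+o(1))=k^{-n}v\,(1+o(1))$ gives $(1-e^{-u})^{-\theta}=\alpha_{k}^{\,n}v^{-\theta}(1+o(1))$. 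Dividing, $G(u)=v^{\theta}A(v)(1-e^{-u})^{-\theta}(1+o(1))=\Psi(u)(1-e^{-u})^{-\theta}(1+o(1))$. Translating to $z$, this is $K(z)=C(z)(1-z)^{-\theta}(1+o(1))$ as $z\to1^{-}$, where $C(z):=\Psi(-\log z)$ is real-analytic on $(0,1)$, is bounded away from $0$ and $\infty$ there, and satisfies $C(z^{k})=C(z)$ because $\Psi(ku)=\Psi(u)$. This is the case $m=0$. For arbitrary $m\geqslant0$, apply it at $z^{k^{m}}\to1^{-}$ and use $1-z^{k^{m}}=(1-z)\sum_{j=0}^{k^{m}-1}z^{j}$ with $\sum_{j=0}^{k^{m}-1}z^{j}\to k^{m}$, together with $C(z^{k^{m}})=C(z)$ and $k^{-m\theta}=\alpha_{k}^{-m}$, to obtain the stated formula.

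The step I expect to be the main obstacle is the second paragraph: upgrading the pointwise Poincar\'e--Perron limit to one that is uniform in $v$ on compact sets (so the error term survives the passage from the discrete scale $u=k^{-n}v$ to the true radial limit $u\to0^{+}$), and at the same time pinning down the real-analyticity and strict positivity of $A$. Uniformity and analyticity I would get from a contraction-mapping argument applied directly to the rescaled sequence $\alpha_{k}^{-j}w_{j}(v)$, whose recurrence differs from a constant-coefficient one with roots $1$ and $\mu_{k}/\alpha_{k}$ (of modulus $<1$) by a perturbation that is $O(k^{-j})$ uniformly on a complex neighbourhood of any compact; positivity, $A(v)\neq0$, is forced purely by the sign pattern $\kappa(n)\geqslant1\Rightarrow w_{j}(v)>0$, which excludes the alternating exceptional mode. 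The remaining ingredients---that $\alpha_{k}$ and $\mu_{k}$ are the roots of $X^{2}-\tfrac{k(k+1)}{2}X-k^{2}$ and that this forces the exponent $\log_{k}\alpha_{k}$ via $2k^{2}t^{2}+k(k+1)t-2=0$ at $t=\alpha_{k}^{-1}$, and that $G$ is monotone and diverges at $0$---are routine.
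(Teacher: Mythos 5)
Your argument is correct in outline and rests on exactly the same quadratic $\lambda^{2}-\tfrac{k(k+1)}{2}\lambda-k^{2}$ and dominant root $\alpha_{k}$ that drive the paper's proof, but you reach the asymptotics by a genuinely different and more self-contained route. The paper first eliminates the inhomogeneous term by combining the functional equation \eqref{MFE} with its image under $z\mapsto z^{k}$, reads off the characteristic polynomial $-k(\lambda-1)\bigl[\lambda^{2}-\tfrac{k(k+1)}{2}\lambda-k^{2}\bigr]$ of the resulting homogeneous three-term Mahler equation, and then invokes Theorem~1 of Bell and Coons \cite{BC2017} as a black box to produce $C(z)$ and the exponent $\log_{k}\alpha_{k}$; the passage to $K(z^{k^{m}})$ is then exactly your final step, via $C(z^{k^{m}})=C(z)$ and $(1-z^{k^{m}})^{\log_{k}\alpha_{k}}=\alpha_{k}^{m}(1-z)^{\log_{k}\alpha_{k}}(1+o(1))$. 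You instead keep the inhomogeneous equation, pass to the exponential scale $z=e^{-u}$, and re-derive the content of the cited theorem by Poincar\'e--Perron asymptotics for $w_{j}(v)=G(k^{-j}v)$, absorbing the inhomogeneity into a bounded particular solution and using positivity and divergence of $\sum_{n\leqslant N}\kappa(n)$ (in place of the paper's appeal to super-linear growth plus distinctness of the eigenvalues) to rule out the subdominant mode; note that for $k=2$ one has $\mu_{2}=-1$, so there the exclusion runs through boundedness rather than sign-alternation, which your phrasing already accommodates. What your route buys is independence from \cite{BC2017} and an explicit construction $C(z)=\Psi(-\log z)$ with $\Psi(ku)=\Psi(u)$; what it costs is that the uniform-on-compacts convergence and real-analyticity of $A(v)$ must be established by hand---your contraction-mapping sketch on a complex neighbourhood is the right tool, and that step is precisely the technical content of the Bell--Coons theorem, so nothing essential is missing.
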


The asymptotics of Proposition \ref{asymp} can be used to give results on both the coefficients $\kappa(n)$ and the algebraic character of the function $K(z)$ and its iterates as $z\mapsto z^k$. 

\begin{corollary}\label{cor:asymp} Let $k\geqslant 2$ be an integer, $\{\kappa(n)\}_{n\geqslant 0}$ be the $k$-regular sequence defined in \eqref{def:kappa}, and $\alpha_k$ be as given in Proposition \ref{asymp}. Then there are positive constants $c_1=c_1(k)$ and $c_2=c_2(k)$ such that as $N\to \infty$, $$c_1\leqslant \frac{1}{n^{\log_k\alpha_k}}\sum_{n\leqslant N}\kappa(n) \leqslant c_2.$$
\end{corollary}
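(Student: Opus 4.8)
The plan is to derive the corollary from Proposition \ref{asymp} by a standard Tauberian argument, passing from the radial asymptotics of the generating function $K(z)$ as $z\to 1^-$ to the asymptotics of the partial sums $\sum_{n\leqslant N}\kappa(n)$. First I would observe that the coefficients $\kappa(n)$ are all positive integers, since each matrix ${\bf A}_i$ has nonnegative entries and ${\bf A}_1\cdots$ applied to $[1\ 0]^T$ never vanishes; in particular the partial sum $S(N):=\sum_{n\leqslant N}\kappa(n)$ is nondecreasing in $N$. Positivity is exactly what licenses a Tauberian conversion in both directions (upper and lower bound), so this monotonicity remark is not a throwaway — it is the hypothesis that makes the Hardy–Littlewood Tauberian theorem applicable.

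Next I would record that $\sum_{n\geqslant 0}S(n)z^n=K(z)/(1-z)$, and invoke Proposition \ref{asymp} with $m=0$: as $z\to 1^-$,
\[
\frac{K(z)}{1-z}=\frac{C(z)}{(1-z)^{1+\log_k\alpha_k}}\,(1+o(1)),
\]
where $C(z)$ is real-analytic on $(0,1)$ and bounded between two positive constants there. Write $\beta:=\log_k\alpha_k>0$ (note $\alpha_k>k$, so indeed $\beta>1$). Because $C$ oscillates, one does not get a clean $S(N)\sim c\,N^\beta/\Gamma(\beta+1)$; instead one gets two-sided bounds. Setting $z=1-1/N$ and using $\frac{K(z)}{1-z}=\sum_n S(n)z^n\geqslant S(N)z^N\geqslant S(N)e^{-1}(1+o(1))$ (since $z^N\to e^{-1}$), together with the lower bound $C(z)\geqslant c_1'>0$, yields $S(N)\leqslant c_2 N^\beta$. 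For the lower bound, I would bound $\sum_n S(n)z^n$ from above by splitting at $N$: the tail $\sum_{n>N}S(n)z^n$ can be controlled by the trivial monotone estimate $S(n)\leqslant S(2n)$-type doubling combined with an a priori polynomial growth bound $S(n)=O(n^{\beta+1})$ (crude, from $\kappa(n)\leqslant \alpha_k^{\lceil\log_k n\rceil}$), so that $\sum_{n\leqslant N}S(n)z^n\geqslant \frac{K(z)}{1-z}-(\text{small})\geqslant \tfrac12 c_1' N^{\beta+1}$; since each $S(n)\leqslant S(N)$ for $n\leqslant N$, the left side is $\leqslant (N+1)S(N)$, giving $S(N)\geqslant c_1 N^\beta$. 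Alternatively, and more cleanly, I would cite the Hardy–Littlewood Tauberian theorem in the form: if $a_n\geqslant 0$ and $\sum a_n z^n\asymp (1-z)^{-\rho}$ as $z\to1^-$ with $\rho>0$, then $\sum_{n\leqslant N}a_n\asymp N^\rho$ — applied with $a_n=\kappa(n)$, $\rho=\beta$; the oscillatory factor $C(z)$ only affects the implied constants, not the order.

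The main obstacle is handling the oscillatory term $C(z)$ rigorously in the Tauberian step: the classical Tauberian theorems are usually stated for a genuine asymptotic $\sum a_n z^n\sim L\,(1-z)^{-\rho}$, whereas here we only have $C(z)$ trapped between constants. I would address this by using the one-sided (Karamata-type) versions, or simply by the elementary sandwich argument above, which only ever uses $0<c_1'\leqslant C(z)\leqslant c_2'<\infty$ on $(0,1)$ and the monotonicity of $S$; neither direction needs convergence of $C(z)$. A secondary, minor point is verifying that the error term $o(1)$ in Proposition \ref{asymp} is uniform enough on the radial segment $z=1-1/N$ — but since the proposition is stated as a genuine limit $z\to1^-$, evaluating along any sequence $z_N\to1^-$ is legitimate, and that is all the argument consumes. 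I would close by noting that $\alpha_k>k$ forces $\beta=\log_k\alpha_k>1$, so the corollary in particular shows $\sum_{n\leqslant N}\kappa(n)$ grows strictly faster than linearly, consistent with $\kappa$ being unbounded.
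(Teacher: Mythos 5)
Your overall strategy---a two-sided elementary Tauberian transfer from the radial estimate of Proposition \ref{asymp}, using only the positivity of $\kappa(n)$ and the two-sided bounds $0<c_1'\leqslant C(z)\leqslant c_2'$ on $(0,1)$---is sound, and it is genuinely different from what the paper does: the paper gives no self-contained argument, simply citing the proof of Theorem~2 of \cite{Cmahleig} (a general summatory-function theorem for Mahler functions, applicable here because the eigenvalues are distinct and nonzero). Your route buys a proof entirely internal to this paper, at the cost of redoing a standard transfer; the paper's route buys brevity and a statement that is in fact sharper (it identifies the periodic fluctuation, as alluded to after the corollary).

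Two steps as written need repair, though both are fixable. First, in the upper bound you truncate $\sum_n S(n)z^n=K(z)/(1-z)\asymp(1-z)^{-(1+\beta)}$ down to the single term $S(N)z^N$; at $z=1-1/N$ this only yields $S(N)\leqslant cN^{\beta+1}$, one power of $N$ too weak (and it is the upper bound $C(z)\leqslant c_2'$, not the lower bound, that is relevant there). The fix is to truncate $K(z)$ itself: $K(z)\geqslant z^N\sum_{n\leqslant N}\kappa(n)$, so $S(N)\leqslant e(1+o(1))\,K(1-1/N)\leqslant c_2N^{\beta}$ with $\beta=\log_k\alpha_k$. Second, in the lower bound the tail $\sum_{n>AN}S(n)z^n$ must be controlled using the \emph{sharp} bound $S(n)=O(n^{\beta})$ just established, with a fixed large $A$; the crude a priori bound $S(n)=O(n^{\beta+1})$ you invoke would force $A$ to grow like $\log N$ and cost a logarithmic factor in the final lower bound. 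With these corrections the sandwich closes: choose $A$ so that $\sum_{n>AN}S(n)z^n\leqslant\tfrac{1}{2}c_1'N^{\beta+1}$, whence $(AN+1)S(AN)\geqslant\sum_{n\leqslant AN}S(n)z^n\geqslant\tfrac{1}{2}c_1'N^{\beta+1}$ and $S(M)\geqslant c_1M^{\beta}$.
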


\noindent In fact, one can be much more specific concerning the behaviour of the sums in Corollary~\ref{cor:asymp}---we will explore these details in a later section.

The asymptotics of Proposition \ref{asymp} can be used to show that the function $K(z)$ is unbounded as $z$ radially approaches each root of unity of order $k^m$ ($m\geqslant 0$); see Proposition \ref{Omega}. Combining this with a recent result of Coons and Tachiya \cite{CTbams} gives the following proposition concerning the algebraic character of the function $K(z)$.

\begin{proposition} Let $G(z)$ be any meromorphic complex-valued function. For each integer $k\geqslant 2$, the functions $K(z)$ and $G(z)$ are algebraically independent over $\mathbb{C}(z)$. In particular, the function $K(z)$ is transcendental over $\mathbb{C}(z)$.
\end{proposition}

Indeed, Proposition \ref{asymp} can be used to give additional information about the function $K(z)$ and its iterates under the map $z\mapsto z^k$. We use Proposition \ref{asymp} to prove the following statement.

\begin{theorem}\label{algindPk} For each integer $k\geqslant 2$, the functions $K(z)$ and $K(z^k)$ are algebraically independent over $\mathbb{C}(z)$.
\end{theorem}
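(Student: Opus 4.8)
The plan is to argue by contradiction: suppose $K(z)$ and $K(z^k)$ are algebraically dependent over $\mathbb{C}(z)$. The key structural input is the Mahler functional equation \eqref{MFE} from Theorem \ref{PkMFE}, which I would rewrite as
\[
K(z)=\left(\sum_{a=0}^{k-1}(a+1)z^a\right)K(z^k)+\left(\sum_{a=0}^{k^2-1}z^a\right)K(z^{k^2})-\sum_{n=0}^{k-1}z^n.
\]
This expresses $K(z)$ as a $\mathbb{C}(z)$-linear combination of $K(z^k)$ and $K(z^{k^2})$. Hence if $K(z)$ and $K(z^k)$ were algebraically dependent, then replacing $z$ by $z^k$ shows $K(z^k)$ and $K(z^{k^2})$ are algebraically dependent with the same transcendence-degree bound, and feeding this back through the functional equation would force $\operatorname{trdeg}_{\mathbb{C}(z)}\mathbb{C}(z)\bigl(K(z),K(z^k),K(z^{k^2}),\ldots\bigr)\leqslant 1$. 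So the whole orbit $\{K(z^{k^m})\}_{m\geqslant 0}$ would lie in a field $\mathbb{C}(z)(K(z))$ of transcendence degree exactly $1$ over $\mathbb{C}(z)$ (it is exactly $1$ because $K(z)$ is transcendental by the preceding proposition).

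The engine that produces the contradiction is Proposition \ref{asymp}: as $z\to 1^-$,
\[
K(z^{k^m})=\frac{1}{\alpha_k^{m}}\cdot\frac{C(z)}{(1-z)^{\log_k\alpha_k}}\cdot(1+o(1)),
\]
so the functions $K(z),K(z^k),K(z^{k^2}),\ldots$ have the \emph{same} order of growth $(1-z)^{-\log_k\alpha_k}$ up to the constant factors $\alpha_k^{-m}$, which form a geometric progression. Concretely I would look at the ratio $K(z^{k^{m}})/K(z^{k^{m+1}})\to\alpha_k$ as $z\to 1^-$ for every $m$. If all the $K(z^{k^m})$ lay in $\mathbb{C}(z)(K(z))$ — a field of transcendence degree one over $\mathbb{C}(z)$, hence an algebraic function field in one variable over $\mathbb{C}(z)$ — then each $K(z^{k^m})$ would satisfy a polynomial relation of bounded degree over $\mathbb{C}(z)(K(z))$, and comparing the $z\to 1^-$ asymptotics (a single dominant term $(1-z)^{-\log_k\alpha_k}$ times the oscillatory $C(z)$) pins down $K(z^{k^m})$ as, up to lower-order error, a fixed $\mathbb{C}(z)$-multiple of $K(z)$. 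Pushing this through: there would exist $R_m(z)\in\overline{\mathbb{C}(z)}$ (algebraic over $\mathbb{C}(z)$) with $K(z^{k^m})=R_m(z)K(z)(1+o(1))$ and $R_m(z)\to\alpha_k^{-m}$ as $z\to1^-$; the ratio $R_m(z)/R_{m+1}(z)$ is then an algebraic function tending to $\alpha_k$. I would then exploit that $\alpha_k$ is a quadratic irrational (being a root of $2x^2-k(k+1)x-2k^2$, with discriminant $k^2((k+1)^2+16)$ a non-square for $k\geqslant2$) while simultaneously the product of all the functional-equation constraints forces an algebraic relation with rational coefficients among the $R_m$; tracking the value at $z=1$ produces a polynomial over $\mathbb{Q}$ of too-small degree satisfied by $\alpha_k$, or more cleanly, shows $\alpha_k\in\mathbb{Q}$, a contradiction.

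The cleanest route, and the one I would actually write, replaces the last step by a direct application of the unboundedness at roots of unity (Proposition \ref{Omega}, already cited in the excerpt) together with a theorem on Mahler functions in the style of Coons–Tachiya: a transcendental $k$-Mahler function that is unbounded in every neighbourhood of the unit circle cannot satisfy a Mahler equation of order $1$ over $\mathbb{C}(z)$ with the iterate $K(z^k)$ algebraic over $\mathbb{C}(z)(K(z))$ unless the characteristic constant $\alpha_k$ is rational. I expect the main obstacle to be making rigorous the passage from "$K(z)$ and $K(z^k)$ are algebraically dependent" to "the ratio of dominant asymptotic constants is an algebraic number over $\mathbb{Q}$" — i.e.\ controlling the error terms $o(1)$ uniformly enough, and handling the oscillatory factor $C(z)$ (which, crucially, satisfies $C(z)=C(z^k)$ and so cancels in the ratios $K(z^{k^m})/K(z^{k^{m+1}})$, which is exactly why Proposition \ref{asymp} was stated with that invariance). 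Once the ratio is shown to be both an algebraic function over $\mathbb{C}(z)$ taking the value $\alpha_k$ at $z=1$ and constrained by the explicit polynomial relations coming from iterating \eqref{MFE}, the irrationality of $\alpha_k$ closes the argument.
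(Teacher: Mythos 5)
There is a genuine gap, and it sits at the heart of your proposed mechanism. Your contradiction is supposed to come from the irrationality of $\alpha_k$, which you justify by claiming the discriminant $k^2((k+1)^2+16)$ is a non-square for all $k\geqslant 2$. This is false precisely at $k=2$: there $(k+1)^2+16=25$, the characteristic quadratic is $\lambda^2-3\lambda-4=(\lambda-4)(\lambda+1)$, and $\alpha_2=4$ is a rational integer (consistent with Section \ref{sec:varcor}, where the partial sums of $\kappa_2$ grow like $N^2$). So the argument collapses in the base case. More fundamentally, even for $k\geqslant 3$ the step from ``$K(z)$ and $K(z^k)$ are algebraically dependent'' to ``$\alpha_k$ is rational'' is not established: the ratio $K(z^{k^m})/K(z^{k^{m+1}})\to\alpha_k$ holds unconditionally by Proposition \ref{asymp}, with no dependence hypothesis needed, so observing that this limit is a particular algebraic number cannot by itself yield a contradiction. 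Passing to the limit $z\to 1^-$ in a hypothetical polynomial relation produces exactly one scalar equation among the values $p_{m_0,m_1}(1)$ and the constants $C^{m_0+m_1}\alpha_k^{m_1}$, and such a single equation is generically satisfiable; nothing forces the polynomials to vanish.

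The paper's proof supplies the missing leverage by radially approaching \emph{every} root of unity of degree $k^n$, not just $z=1$. Proposition \ref{Omega} shows $K(\xi z)\sim\Omega(\xi)\,C(z)(1-z)^{-\log_k\alpha_k}$, so extracting the top homogeneous part of the assumed relation yields $\sum C_{m_0,M-m_0}\,p_{m_0,M-m_0}(\xi)\,\Omega(\xi)^{m_0}\Omega(\xi^k)^{M-m_0}=0$ simultaneously for the infinite family of such $\xi$. The real work is then to show $\Omega$ admits no such relation: this is done via the auxiliary function $\omega(z)=\alpha_k\Omega(z)/\Omega(z^k)$, its Mahler-type functional equation, and the polynomial-identity analysis of Proposition \ref{nonrelation} (a minimal-degree/proportionality argument in the style of Brent--Coons--Zudilin), culminating in Lemmas \ref{omegatrans} and \ref{lem2}. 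None of this machinery is replaced by anything in your sketch; the appeal to a ``Coons--Tachiya style'' theorem addresses only transcendence of $K(z)$ over meromorphic functions (the paper's Proposition 4), not the algebraic independence of $K(z)$ and $K(z^k)$. To repair your approach you would need, at minimum, to work over the full set of $k^n$-th roots of unity and to prove an analogue of Proposition \ref{nonrelation}.
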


\noindent In some sense, Theorem \ref{algindPk} can be interpreted to say that the Mahler functional equation given in Theorem \ref{PkMFE} is essentially the minimal algebraic relation between the function $K(z)$ and its Mahler iterates, that is, the set $\{K(z),K(z^k),K(z^{k^2}),\ldots\}.$

Note that the combination of Theorem \ref{algindPk} with the celebrated result of Ku.~Nishioka \cite[Corollary~2]{N1990} gives the following result on the algebraic independence of certain special values of $K(z)$ and $K(z^k)$. 

\begin{corollary} Let $k\geqslant 2$ be an integer and $\alpha$ be a nonzero algebraic number in the open unit disc. Then $K(\alpha)$ and $K(\alpha^k)$ are algebraically independent over $\mathbb{Q}$.
\end{corollary}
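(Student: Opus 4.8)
The plan is to obtain the corollary by combining Theorem~\ref{algindPk} with Ku.~Nishioka's theorem \cite[Corollary~2]{N1990}, which transfers algebraic independence of the components of a solution of a linear Mahler system from the function field $\mathbb{C}(z)$ to their values at an admissible algebraic point. The first step is therefore to exhibit $K(z)$ and $K(z^k)$ as the two components of such a solution under the transformation $z\mapsto z^k$.

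Put $f_1(z):=K(z)$ and $f_2(z):=K(z^k)$. On the one hand $f_1(z^k)=K(z^k)=f_2(z)$. On the other hand, dividing the Mahler functional equation \eqref{MFE} of Theorem~\ref{PkMFE} by the polynomial $\sum_{a=0}^{k^2-1}z^a$ and solving for $K(z^{k^2})$ writes $f_2(z^k)=K(z^{k^2})$ as a $\mathbb{Q}(z)$-linear combination of $f_1(z)$ and $f_2(z)$ together with an inhomogeneous term in $\mathbb{Q}(z)$. Hence $\mathbf{f}(z):=(f_1(z),f_2(z))^{T}$ satisfies a first-order linear Mahler system
\begin{equation*}
\mathbf{f}(z^k)=A(z)\,\mathbf{f}(z)+\mathbf{b}(z),\qquad
A(z)=\begin{pmatrix} 0 & 1\\[4pt] \dfrac{1}{\sum_{a=0}^{k^2-1}z^a} & \dfrac{-\sum_{a=0}^{k-1}(a+1)z^a}{\sum_{a=0}^{k^2-1}z^a}\end{pmatrix},\qquad \mathbf{b}(z)\in\mathbb{Q}(z)^2,
\end{equation*}
which, after the standard homogenisation by adjoining the constant function~$1$ (fixed by $z\mapsto z^k$), becomes a homogeneous linear Mahler system over $\mathbb{Q}(z)$. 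Both $f_1$ and $f_2$ are power series with integer coefficients, and since $k$-regular sequences grow at most polynomially (compare Corollary~\ref{cor:asymp}), they converge on the open unit disc.

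It remains to check the admissibility hypotheses of \cite[Corollary~2]{N1990} at $\alpha$ and to supply the non-degeneracy input. The entries of $A(z)$ are holomorphic away from the zeros of $\sum_{a=0}^{k^2-1}z^a=(1-z^{k^2})/(1-z)$, i.e.\ away from the $k^2$-th roots of unity other than $1$; in particular $A$ is holomorphic on the whole open unit disc, with $A(0)$ defined. As $0<|\alpha|<1$, the forward orbit $\alpha,\alpha^k,\alpha^{k^2},\dots$ remains in the punctured open unit disc (and tends to $0$), so $A(\alpha^{k^j})$ is defined for all $j\geqslant 0$ and no exceptional values of $\alpha$ need be excluded. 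Finally, Theorem~\ref{algindPk} asserts precisely that $K(z)$ and $K(z^k)$ are algebraically independent over $\mathbb{C}(z)$ — equivalently over $\overline{\mathbb{Q}}(z)$, since the $f_i$ have algebraic coefficients — which is the non-degeneracy hypothesis. Invoking \cite[Corollary~2]{N1990} then gives that $f_1(\alpha)=K(\alpha)$ and $f_2(\alpha)=K(\alpha^k)$ are algebraically independent over $\mathbb{Q}$.

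I do not expect a real obstacle: the arithmetic content is carried entirely by Nishioka's theorem, and the function-field algebraic independence it requires is already established in Theorem~\ref{algindPk}. What remains — recasting \eqref{MFE} as a first-order linear Mahler system and verifying that its coefficient matrix has no poles along the orbit of $\alpha$ — is routine, the latter being automatic from $|\alpha|<1$. The one point worth stating with care is that the input to \cite[Corollary~2]{N1990} is exactly algebraic independence over $\mathbb{C}(z)$, so Theorem~\ref{algindPk} is used verbatim, without any strengthening.
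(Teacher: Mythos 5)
Your proposal is correct and follows exactly the route the paper intends: the paper derives this corollary purely by combining Theorem~\ref{algindPk} with Nishioka's \cite[Corollary~2]{N1990}, offering no further argument, and your write-up simply makes explicit the routine verifications (recasting \eqref{MFE} as a first-order linear Mahler system, noting that the coefficient denominator $\sum_{a=0}^{k^2-1}z^a$ has no zeros in the open unit disc, and passing from $\mathbb{C}(z)$ to $\overline{\mathbb{Q}}(z)$) that the paper leaves implicit.
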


This paper is outlined as follows. In Section \ref{sec:Mahlfunc} prove Theorem \ref{PkMFE}. Section \ref{sec:radas} contains the results leading to the radial asymptotics described in Proposition \ref{asymp}, which are then used in Section \ref{sec:algindep} to prove Theorem \ref{algindPk}. The final section contains a brief discussion of Corollary \ref{cor:asymp} and then a question about a possible connection between the function $\kappa(n)$ (specialised at $k=2$) and the Takagi function.

\section{A Mahler-type functional equation for $K(z)$}\label{sec:Mahlfunc}

In this section, we provide the proof of Theorem \ref{PkMFE}. This theorem follows from recurrences satisfied by the sequence of values of $\kappa$, which we record in the following lemma. 

\begin{lemma}\label{quadn} If $n\geqslant 1$ and $a,b\in\{0,\ldots,k-1\}$, then $$\kappa(k^2n+kb+a)=(a+1)\kappa(kn+b)+\kappa(n).$$
\end{lemma}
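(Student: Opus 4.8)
The plan is to unwind the matrix-product definition \eqref{def:kappa} of $\kappa$ in terms of the base-$k$ digits. Recall that $\kappa(n) = {\bf w}^T {\bf A}_{i_0}\cdots{\bf A}_{i_s}{\bf v}$, where $i_s\cdots i_0$ is the base-$k$ expansion of $n$ and ${\bf w}^T={\bf v}^T=[1\ 0]$. First I would observe that if $n\geqslant 1$ has base-$k$ expansion $i_s\cdots i_0$, then $k^2 n + kb + a$ has base-$k$ expansion $i_s\cdots i_0\, b\, a$ (here using $n\geqslant 1$ so that no spurious leading-zero issue arises, and $a,b\in\{0,\ldots,k-1\}$ are genuine digits). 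Hence the word $w$ associated to $k^2n+kb+a$ is $a\,b\,i_0\cdots i_s$, and therefore
\begin{equation*}
\kappa(k^2n+kb+a) = {\bf w}^T {\bf A}_a {\bf A}_b {\bf A}_{i_0}\cdots{\bf A}_{i_s}{\bf v} = {\bf w}^T {\bf A}_a {\bf A}_b\, M {\bf v},
\end{equation*}
where I abbreviate $M := {\bf A}_{i_0}\cdots{\bf A}_{i_s}$. Similarly $\kappa(kn+b) = {\bf w}^T {\bf A}_b M {\bf v}$ and $\kappa(n) = {\bf w}^T M {\bf v}$.

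Next I would reduce everything to a single identity about $2\times 2$ matrices that is independent of $M$. Writing ${\bf A}_a = \left(\begin{smallmatrix} a+1 & 1\\ 1 & 0\end{smallmatrix}\right)$, the key is the factorisation
\begin{equation*}
{\bf w}^T {\bf A}_a = [\,a+1\ \ 1\,] = (a+1)\,[\,1\ \ 0\,] + [\,0\ \ 1\,],
\end{equation*}
i.e. ${\bf w}^T{\bf A}_a = (a+1){\bf w}^T + {\bf e}_2^T$ where ${\bf e}_2^T = [0\ 1]$. Then
\begin{equation*}
\kappa(k^2n+kb+a) = \bigl((a+1){\bf w}^T + {\bf e}_2^T\bigr){\bf A}_b M{\bf v} = (a+1){\bf w}^T{\bf A}_b M{\bf v} + {\bf e}_2^T {\bf A}_b M{\bf v} = (a+1)\kappa(kn+b) + {\bf e}_2^T{\bf A}_b M{\bf v},
\end{equation*}
so it remains only to check that ${\bf e}_2^T{\bf A}_b M{\bf v} = {\bf w}^T M{\bf v} = \kappa(n)$. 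This follows because ${\bf e}_2^T {\bf A}_b = [0\ 1]\left(\begin{smallmatrix} b+1 & 1\\ 1 & 0\end{smallmatrix}\right) = [1\ 0] = {\bf w}^T$, independently of $b$. Combining the last two displays gives $\kappa(k^2n+kb+a) = (a+1)\kappa(kn+b) + \kappa(n)$, as claimed.

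The only subtle point — and the thing to state carefully rather than a genuine obstacle — is the bookkeeping on base-$k$ expansions: one must use the hypothesis $n\geqslant 1$ to ensure that prepending the digits $b$ and $a$ (possibly zero) to the expansion of $n$ really does produce the expansion of $k^2n+kb+a$ with the word $w$ extended on the correct (low-order) side, matching the convention $w = i_0i_1\cdots i_s$ for $(n)_k = i_s\cdots i_0$ fixed just before \eqref{def:kappa}. Once the digit bookkeeping is pinned down, the algebra is the two one-line identities ${\bf w}^T{\bf A}_a = (a+1){\bf w}^T + {\bf e}_2^T$ and ${\bf e}_2^T{\bf A}_b = {\bf w}^T$, and the lemma drops out. (These same two identities are exactly the statement that $\left(\begin{smallmatrix} a+1 & 1\\ 1& 0\end{smallmatrix}\right)$ acting on the left sends the "denominator row" to $(a+1)\times(\text{denominator}) + (\text{previous denominator})$ and the "previous denominator row" to the "denominator row" — the familiar continued-fraction recursion $q_{n+1} = a_{n+1}q_n + q_{n-1}$ — so one could alternatively phrase the whole proof in that language.)
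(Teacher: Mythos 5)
Your proof is correct and is essentially the same as the paper's: both unwind the matrix definition of $\kappa$ and rely on the two identities $[1\ 0]{\bf A}_i=(i+1)[1\ 0]+[0\ 1]$ and $[0\ 1]{\bf A}_i=[1\ 0]$ to peel off the low-order digits $a$ and $b$. Your extra remarks on the base-$k$ digit bookkeeping and the continued-fraction interpretation are accurate but not needed beyond what the paper already does.
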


\begin{proof} For any $i\in\{0,\ldots,k-1\}$ we have both $$[0\ 1]{\bf A}_i=[1\ 0]\quad\mbox{and}\quad [1\ 0]{\bf A}_i=(i+1)[1\ 0]+[0\ 1].$$ If $n\geqslant 1$ and $a,b\in\{0,\ldots,k-1\}$, then writing $w$ as the reversal of $(n)_k$ we have \begin{align*}\kappa(k^2n+kb+a)&=[1\ 0]{\bf A}_a{\bf A}_b{\bf A}_w [1\ 0]^T\\
&=\left((a+1)[1\ 0]+[0\ 1]\right){\bf A}_b{\bf A}_w [1\ 0]^T\\
&=(a+1)[1\ 0]{\bf A}_b{\bf A}_w [1\ 0]^T+[0\ 1]{\bf A}_b{\bf A}_w [1\ 0]^T\\
&=(a+1)\kappa(kn+b)+[1\ 0]{\bf A}_w [1\ 0]^T\\
&=(a+1)\kappa(kn+b)+\kappa(n).\qedhere
\end{align*}
\end{proof}

\begin{proof}[Proof of Theorem \ref{PkMFE}] We use the relationship in Lemma \ref{quadn} to give 
\begin{align*}
K(z)
&= \sum_{n\geqslant 1}\sum_{i=0}^{k-1}\sum_{j=0}^{k-1} \kappa(k^2n+ki+j)z^{k^2n+ki+j}+\sum_{n=0}^{k^2-1}\kappa(n)z^n\\
&= \sum_{n\geqslant 1}\sum_{i=0}^{k-1}\sum_{j=0}^{k-1} \left[(j+1)\kappa(kn+i)+\kappa(n)\right]z^{k^2n+ki+j}+\sum_{n=0}^{k^2-1}\kappa(n)z^n\\
&= \sum_{j=0}^{k-1}(j+1)z^j\sum_{n\geqslant 1}\sum_{i=0}^{k-1}\kappa(kn+i)(z^k)^{kn+i}\\
&\qquad\qquad+\sum_{j=0}^{k-1}z^j\sum_{i=0}^{k-1}z^{ki}\sum_{n\geqslant 1}\kappa(n)z^{k^2n}+\sum_{n=0}^{k^2-1}\kappa(n)z^n,
\end{align*} so that
\begin{align*}
K(z)&= \sum_{j=0}^{k-1}(j+1)z^j\left(K(z^k)-\sum_{n=0}^{k-1}\kappa(n)z^{kn}\right)\\
&\qquad\qquad+\left(\sum_{j=0}^{k^2-1}z^j\right)\left(K(z^{k^2})-\kappa(0)\right)+\sum_{n=0}^{k^2-1}\kappa(n)z^n.
\end{align*}
Rearranging this, we have 
\begin{multline}
K(z)-\left(\sum_{j=0}^{k-1}(j+1)z^j\right)K(z^k)-\left(\sum_{j=0}^{k^2-1}z^j\right)K(z^{k^2})\\ \label{RHSq}=\sum_{n=0}^{k^2-1}\kappa(n)z^n-\kappa(0)\left(\sum_{j=0}^{k^2-1}z^j\right)-\left(\sum_{j=0}^{k-1}(j+1)z^j\right)\left(\sum_{n=0}^{k-1}\kappa(n)z^{kn}\right).
\end{multline} Using the matrix representation we have $\kappa(n)=n+1$ for $n=0,\ldots,k-1$, and for $n=ak+b$ with $a=1,\ldots,k-1$ and $b=0,\ldots,k-1$ we have $\kappa(n)=(a+1)(b+1)+1$. Thus the continuing the equality \eqref{RHSq} 
\begin{align*}
K(&z)-\left(\sum_{j=0}^{k-1}(j+1)z^j\right)K(z^k)-\left(\sum_{j=0}^{k^2-1}z^j\right)K(z^{k^2})\\
&= \sum_{n=k}^{k^2-1}\kappa(n)z^n+\sum_{n=0}^{k-1}\kappa(n)z^n-\left(\sum_{j=0}^{k^2-1}z^j\right)-\left(\sum_{j=0}^{k-1}(j+1)z^j\right)\left(\sum_{n=0}^{k-1}(n+1)z^{kn}\right)\\
&= \sum_{a=1}^{k-1}\sum_{b=0}^{k-1}(a+1)(b+1)z^{ak+b}+\sum_{n=k}^{k^2-1}z^n+\sum_{n=0}^{k-1}(n+1)z^n-\left(\sum_{j=0}^{k^2-1}z^j\right)\\
&\qquad\qquad-\sum_{j=0}^{k-1}(j+1)z^j-\sum_{j=0}^{k-1}\sum_{n=1}^{k-1}(j+1)(n+1)z^{kn+j},
\end{align*} which, after cancelling terms, proves the theorem.
\end{proof}

\section{Radial asymptotics of $K(z)$}\label{sec:radas}

We prove Theorem \ref{asymp} in this section by appealing to a recent result of Bell and Coons \cite{BC2017}, which provides the radial asymptotics of a Mahler function $F(z)$ based on the existence of the Mahler eigenvalue associated to the function $F(z)$.

\begin{proof}[Proof of Proposition \ref{asymp}] We start with the functional equation \eqref{MFE} established in Theorem \ref{PkMFE} and send $z$ to $z^k$ to get $$K(z^k)-\left(\sum_{a=0}^{k-1}(a+1)z^{ka}\right)K(z^{k^2})-\left(\sum_{a=0}^{k^2-1}z^{ka}\right)K(z^{k^3})=q(z^k),$$ where we have set $$q(z)=q_k(z):=-\sum_{n=0}^{k-1}z^n.$$ We now multiply the original functional equation \eqref{MFE} by $q(z^k)$ and the new functional equation by $q(z)$ and subtract the resulting functional equations to get the homogeneous equation \begin{multline*}q(z^k)K(z)-\left[q(z^k)\left(\sum_{a=0}^{k-1}(a+1)z^{a}\right)+q(z)\right]K(z^{k})\\-\left[q(z^k)\left(\sum_{a=0}^{k^2-1}z^{a}\right)-q(z)\left(\sum_{a=0}^{k-1}(a+1)z^{ka}\right)\right]K(z^{k^2}) \\+q(z)\left(\sum_{a=0}^{k^2-1}z^{ka}\right)K(z^{k^3})=0.\end{multline*}

Following the method of Bell and Coons \cite{BC2017} (see also Brent, Coons, and Zudilin \cite{BCZ2016}) we use the homogeneous functional equation to form the characteristic polynomial \begin{align*}\chi_{K}(\lambda)&=q(1)\left[\lambda^3-\left(\sum_{a=0}^{k-1}(a+1)+1\right)\lambda^2-\left(\sum_{a=0}^{k^2-1}1-\sum_{a=0}^{k-1}(a+1)\right)\lambda+\sum_{a=0}^{k^2-1}1\right]\\
&=-k(\lambda-1)\left[\lambda^2-\frac{k(k+1)}{2}\lambda-k^2\right],\end{align*} where we have used the value of $q(1)=-k$. Since $\chi_{K}(\lambda)$ has three distinct roots for each positive integer $k\geqslant 2$, and the sequence $\{\kappa(n)\}_{n\geqslant 0}$ grows super-linearly, {Theorem~1} of Bell and Coons \cite{BC2017} applies to give that as $z\to 1^-$ $$K(z)=\frac{C(z)}{(1-z)^{\log_k \alpha_k}} (1+o(1)),$$ where $\log_k$ denotes the principal value of the base-$k$ logarithm, $C(z)$ is a real-analytic nonzero oscillatory term dependent on $k$, which on the interval $(0,1)$ is bounded away from $0$ and $\infty$, and satisfies $C(z)=C(z^k)$ and $$\alpha_k=\frac{k(k+1)+k\sqrt{(k+1)^2+16}}{4}.$$

It remains to show that these asymptotics hold for $z$ replaced by $z^{k^m}$ for any positive integer $m$. This follows exactly from the fact that $C(z^{k^m})=C(z)$ and the identity $(1-z^{k^m})^{\log_k \alpha_k}=(1-z)^{\log_k \alpha_k}\alpha_k^{m}(1+o(1))$ as $z\to 1^-$.
\end{proof}

Using Proposition \ref{asymp} we can determine the asymptotics of $K(z)$ as $z$ radially approaches any $k^n$th root of unity.

\begin{proposition}\label{Omega} Let $C(z)$ and $\alpha_k$ be as defined in Theorem \ref{asymp}, $m\geqslant 0$ be an integer, and $\xi$ be a root of unity of degree $k^m$. As $z\to 1^-$, we have $$K(\xi z)=\Omega(\xi)\cdot\frac{C(z)}{(1-z)^{\log_k\alpha_k}} \cdot(1+o(1)),$$ where the function $\Omega(z)$ satisfies $\Omega(1)=1$ and $$\left(z-1\right)\Omega(z)=\left((k+2)z^k+\sum_{a=0}^{k-1}z^a\right)\alpha_k^{-1}\Omega(z^k)+\left(z^{k^2}-1\right)\alpha_k^{-2}\Omega(z^{k^2}).$$
\end{proposition}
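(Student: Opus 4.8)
The plan is to establish the asymptotic by strong induction on the integer $m$ (taken as the order of $\xi$), and to read off the functional equation for $\Omega$ directly from that induction. Define $\Omega$ on the roots of unity of $k$-power order by $\Omega(1):=1$ and, for $\xi\neq1$ with $\xi^{k^m}=1$,
\begin{equation*}
\Omega(\xi):=\frac{1}{\alpha_k}\left(\sum_{a=0}^{k-1}(a+1)\xi^a\right)\Omega(\xi^{k})+\frac{1}{\alpha_k^{2}}\left(\sum_{a=0}^{k^2-1}\xi^a\right)\Omega(\xi^{k^2}).
\end{equation*}
This recursion is well founded because $\xi^{k}$ and $\xi^{k^2}$ have $k$-power order strictly smaller than that of $\xi$; and multiplying it through by $(\xi-1)$ and simplifying the polynomial coefficients (using $(\xi-1)\sum_{a=0}^{k^2-1}\xi^a=\xi^{k^2}-1$, and evaluating $(\xi-1)\sum_{a=0}^{k-1}(a+1)\xi^a$ by differentiating the geometric sum $\sum_{a=0}^{k-1}\xi^{a+1}$) yields the stated functional equation for $\Omega$, with $\Omega(1)=1$. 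So everything reduces to proving that $K(\xi z)=\Omega(\xi)\cdot C(z)(1-z)^{-\log_k\alpha_k}(1+o(1))$ as $z\to1^-$.

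For the base case $\xi=1$ this is the $m=0$ instance of Proposition \ref{asymp}, which gives $K(z)=C(z)(1-z)^{-\log_k\alpha_k}(1+o(1))$ and hence $\Omega(1)=1$. For the inductive step fix $\xi\neq1$ with $\xi^{k^m}=1$, and substitute $z\mapsto\xi z$ into \eqref{MFE}; since $(\xi z)^{k^j}=\xi^{k^j}z^{k^j}$ this gives
\begin{equation*}
K(\xi z)=\left(\sum_{a=0}^{k-1}(a+1)\xi^a z^a\right)K(\xi^{k}z^{k})+\left(\sum_{a=0}^{k^2-1}\xi^a z^a\right)K(\xi^{k^2}z^{k^2})-\sum_{n=0}^{k-1}\xi^n z^n.
\end{equation*}
The roots of unity $\xi^{k}$ and $\xi^{k^2}$ have smaller $k$-power order than $\xi$, so the inductive hypothesis applies to $K(\xi^{k}w)$ and $K(\xi^{k^2}w)$ with the real variable $w$ equal to $z^{k}$ and $z^{k^2}$, both tending to $1^-$; combining this with $C(z^{k^j})=C(z)$ and $(1-z^{k^j})^{\log_k\alpha_k}=\alpha_k^{\,j}(1-z)^{\log_k\alpha_k}(1+o(1))$ (exactly as in the proof of Proposition \ref{asymp}) shows that $K(\xi^{k}z^{k})=\alpha_k^{-1}\Omega(\xi^{k})\,C(z)(1-z)^{-\log_k\alpha_k}(1+o(1))$ and $K(\xi^{k^2}z^{k^2})=\alpha_k^{-2}\Omega(\xi^{k^2})\,C(z)(1-z)^{-\log_k\alpha_k}(1+o(1))$. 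Since the polynomial prefactors $\sum_{a}(a+1)\xi^a z^a$ and $\sum_{a}\xi^a z^a$ converge to finite constants and $\sum_{n}\xi^n z^n$ stays bounded, hence is $o\bigl(C(z)(1-z)^{-\log_k\alpha_k}\bigr)$ because $\alpha_k>k$ forces $\log_k\alpha_k>0$ and $C$ is bounded away from $0$ on $(0,1)$, substituting back gives precisely $K(\xi z)=\Omega(\xi)\cdot C(z)(1-z)^{-\log_k\alpha_k}(1+o(1))$ with $\Omega(\xi)$ as defined above. This closes the induction.

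I do not expect a deep obstacle: the proposition is essentially a formal consequence of Theorem \ref{PkMFE} and Proposition \ref{asymp}. The points that need care are (i) that the expansion in Proposition \ref{asymp} has the clean product shape (bounded oscillatory factor) $\times(1-z)^{-\log_k\alpha_k}\times(1+o(1))$, so that multiplying by a convergent polynomial prefactor and replacing $z$ by $z^{k^j}$ preserves it and composes the $o(1)$ terms without loss; (ii) that the inhomogeneous term of \eqref{MFE}, being bounded, stays subdominant at every level of the induction, which is precisely where $\log_k\alpha_k>0$ and the lower bound on $C$ near $1$ are used; and (iii) the bookkeeping of the strong induction on $k$-power order, in particular the well-foundedness of the recursion defining $\Omega$. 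Of these, (i) and (ii) are the main things to pin down carefully.
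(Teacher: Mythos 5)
Your argument is correct and is essentially the paper's own proof: the paper likewise establishes the asymptotics by working up through roots of unity of increasing $k$-power order via the functional equation at $\xi z$, arrives at the same recursion $\Omega(\xi)=\bigl(\sum_{a=0}^{k-1}(a+1)\xi^a\bigr)\alpha_k^{-1}\Omega(\xi^k)+\bigl(\sum_{a=0}^{k^2-1}\xi^a\bigr)\alpha_k^{-2}\Omega(\xi^{k^2})$, and then rewrites it by clearing denominators. One caveat on that last cosmetic step: if you actually carry out the differentiation you describe, you get $(z-1)\sum_{a=0}^{k-1}(a+1)z^a=kz^k-\sum_{a=0}^{k-1}z^a$, not $(k+2)z^k+\sum_{a=0}^{k-1}z^a$ (the latter does not even vanish at $z=1$), so the displayed functional equation in the Proposition does not follow as printed; this is a slip in the paper's own statement and proof, which your (correct) recursion for $\Omega$ silently inherits when you assert it "yields the stated functional equation."
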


\begin{proof} It is clear from Proposition \ref{asymp} that $\Omega(1)=1$. Using the functional equation \eqref{MFE} and Proposition \ref{asymp}, for any $k$th root of unity $\xi_1$, we have as $z\to 1^-$
\begin{align*}
K(\xi_1 z)&=\left(\sum_{a=0}^{k-1}(a+1)(\xi_1z)^a\right)K(z^k)+\left(\sum_{a=0}^{k^2-1}(\xi_1z)^a\right)K(z^{k^2})+q(\xi_1z)\\
&=\left(\sum_{a=0}^{k-1}(a+1)\xi_1^a\right)\frac{1}{\alpha_k}\cdot\frac{C(z)}{(1-z)^{\log_k\alpha_k}} \cdot(1+o(1))\\
&\qquad\qquad+\left(\sum_{a=0}^{k^2-1}\xi_1^a\right)\frac{1}{\alpha_k^{2}}\cdot\frac{C(z)}{(1-z)^{\log_k\alpha_k}} \cdot(1+o(1))+q(\xi_1)(1+o(1))\\
&=\left[\left(\sum_{a=0}^{k-1}(a+1)\xi_1^a\right)\frac{1}{\alpha_k}+\left(\sum_{a=0}^{k^2-1}\xi_1^a\right)\frac{1}{\alpha_k^{2}}\right]\frac{C(z)}{(1-z)^{\log_k\alpha_k}} \cdot(1+o(1))\\
&=\Omega(\xi_1)\cdot\frac{C(z)}{(1-z)^{\log_k\alpha_k}} \cdot(1+o(1)).
\end{align*}

Now if $\xi_2$ is any root of unity of degree $k^2$ such that $\xi_2^k=\xi_1$, then 
\begin{align*}
K(\xi_2 z)&=\left(\sum_{a=0}^{k-1}(a+1)\xi_2^a\right)\frac{\Omega(\xi_1)}{\alpha_k}\cdot\frac{C(z)}{(1-z)^{\log_k\alpha_k}} \cdot(1+o(1))\\
&\qquad\qquad+\left(\sum_{a=0}^{k^2-1}\xi_2^a\right)\frac{1}{\alpha_k^{2}}\cdot\frac{C(z)}{(1-z)^{\log_k\alpha_k}} \cdot(1+o(1))+q(\xi_2)(1+o(1)),
\end{align*} so that
\begin{align*}
K(\xi_2 z)&=\left[\left(\sum_{a=0}^{k-1}(a+1)\xi_2^a\right)\frac{\Omega(\xi_1)}{\alpha_k}+\left(\sum_{a=0}^{k^2-1}\xi_2^a\right)\frac{1}{\alpha_k^{2}}\right]\frac{C(z)}{(1-z)^{\log_k\alpha_k}} \cdot(1+o(1))\\
&=\Omega(\xi_2)\cdot\frac{C(z)}{(1-z)^{\log_k\alpha_k}} \cdot(1+o(1)),
\end{align*} where $$\Omega(\xi_2)=\left(\sum_{a=0}^{k-1}(a+1)\xi_2^a\right)\frac{\Omega(\xi_1)}{\alpha_k}+\left(\sum_{a=0}^{k^2-1}\xi_2^a\right)\frac{1}{\alpha_k^{2}}.$$ Continuing in this manner defines a function $\Omega(\xi)$ on the roots of unity of degree $k^n$ for any $n\geqslant 0$ such that $$\Omega(\xi)=\left(\sum_{a=0}^{k-1}(a+1)\xi^a\right)\frac{\Omega(\xi^k)}{\alpha_k}+\left(\sum_{a=0}^{k^2-1}\xi^a\right)\frac{\Omega(\xi^{k^2})}{\alpha_k^{2}}.$$ Combining this with the identities $$\sum_{a=0}^{k-1}(a+1)z^a=\frac{1}{z-1}\left((k+2)z^k+\sum_{a=0}^{k-1}z^a\right)\quad\mbox{and}\quad \sum_{a=0}^{k^2-1}z^a =\frac{z^{k^2}-1}{z-1}$$ provides the result.
\end{proof}

Note that the function $\Omega(z)$ depends on $k$ and is defined only on the set of roots of unity of degree $k^m$ for integers $m\geqslant 0$. 

\section{Algebraic independence of $K(z)$ and $K(z^k)$}\label{sec:algindep}

In this section, we prove Theorem \ref{algindPk}. It follows from the following general result and two lemmas; see Brent, Coons and Zudilin \cite[Theorem~4]{BCZ2016} for a similar argument.

\begin{proposition}\label{nonrelation} Let $k\geqslant 2$ be a positive integer and suppose that there is a rational function $\lambda(z)$  and polynomials $p_0(z),\cdots,p_M(z)\in\mathbb{C}[z]$ such that \begin{equation}\label{T4} \lambda(z)\sum_{m=0}^M p_m(z) y^m = \sum_{m=0}^M p_{M-m}(z^k)\left((z-1)(z^k-1)y+s(z)\right)^{m},\end{equation} where $$s(z):=-(k+2)z^k-\sum_{a=0}^{k-1}z^a.$$ Then $p_j(z)=0$ for each $j=0,\ldots,M$.
\end{proposition}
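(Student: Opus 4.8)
The plan is to argue by contradiction. Suppose \eqref{T4} holds with $M\ge1$ and with $p_0,\dots,p_M$ not all identically zero, and among all such identities choose one with $M$ minimal; dividing through by a common polynomial factor we may also assume $\gcd(p_0,\dots,p_M)=1$. Write $P(z,y):=\sum_{m=0}^{M}p_m(z)y^m$, $g(z):=(z-1)(z^k-1)$, and $P^{*}(t,W):=\sum_{m=0}^{M}p_{M-m}(t)W^m$ for the coefficient-reversed polynomial, so that \eqref{T4} becomes $\lambda(z)P(z,y)=P^{*}\!\big(z^k,\,g(z)y+s(z)\big)$. First I would record two elementary consequences. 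Extracting the coefficient of $y^j$ from each side gives
\[
\lambda(z)\,p_j(z)=g(z)^j\,h_j(z),\qquad h_j(z):=\sum_{m\ge j}\binom{m}{j}\,s(z)^{m-j}\,p_{M-m}(z^k)\in\mathbb C[z];
\]
writing $\lambda=u/v$ in lowest terms, the resulting divisibility of $u\,p_j$ by $v\,g^j$ for every $j$, together with $\gcd(p_j)=1$, forces $v$ to be constant, so $\lambda\in\mathbb C[z]$, and the instance $j=M$ reads $\lambda p_M=g(z)^M p_0(z^k)$. Also $\lambda\not\equiv0$: otherwise the right-hand side of \eqref{T4} vanishes, so its top coefficient in $y$, namely $p_L(z^k)g(z)^{M-L}$ where $L$ denotes the smallest index with $p_L\not\equiv0$, would vanish, giving $p_L\equiv0$. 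Finally, comparing degrees in $y$: the right-hand side of \eqref{T4} has $y$-degree exactly $M-L$, because its index-$(M-L)$ summand is the only one reaching that power of $y$ and contributes the nonzero leading term $p_L(z^k)g(z)^{M-L}$; hence $\deg_y P=M-L$, and in particular $p_M\not\equiv0$ iff $L=0$.

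Next I would prove $L=0$. The identity \eqref{T4} persists with $z$ replaced by $z^{k^r}$ for every $r\ge0$, and reading it as saying that $P(z,Y)$ and $P^{*}\!\big(z^k,g(z)Y+s(z)\big)$ are proportional over $\mathbb C(z)$ yields a correspondence on roots: $Y=\gamma$ is a root of $P(z,Y)$ of multiplicity $\mu$ exactly when $g(z)\gamma+s(z)$ is a root of $P^{*}(z^k,\cdot)$ of multiplicity $\mu$, hence, provided $g\gamma+s\notin\{0,\infty\}$, when $1/(g(z)\gamma+s(z))$ is a root of $P(z^k,Y)$ of multiplicity $\mu$. Since $W=0$ is a root of $P^{*}(z^k,\cdot)$ of multiplicity exactly $L$, the value $y=-s(z)/g(z)$ is a root of $P(z,Y)$ of multiplicity exactly $L$. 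Pulling the analogous distinguished root $-s(z^{k^r})/g(z^{k^r})$ of $P(z^{k^r},Y)$ back through $r$ steps of the correspondence produces, for each $r\ge1$, yet another root of $P(z,Y)$ of multiplicity at least $L$, and a direct computation with the explicit polynomials $s(z),g(z)$ shows that all of these roots are pairwise distinct and different from $0$. If $L\ge1$, this forces $P(z,Y)$ to have arbitrarily many roots counted with multiplicity, contradicting $\deg_y P=M-L<\infty$. Hence $L=0$, so $\deg_y P=M$ and $p_0,p_M\not\equiv0$.

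It remains to reach a contradiction when $L=0$. From $\lambda p_M=g(z)^M p_0(z^k)$ one gets $\deg\lambda=M(k+1)+k\deg p_0-\deg p_M$; substituting this back into the relations $\lambda p_j=g^j h_j$ and using $\deg g=k+1$, $\deg s=k$ together with the bound $\deg h_j\le k\big((M-j)+\max_{0\le i\le M-j}(\deg p_i-i)\big)$ pins down the degree profile $(\deg p_0,\dots,\deg p_M)$ up to the behaviour of leading coefficients. One then inspects those leading coefficients: if the top-degree term of some $h_j$ cancels, that cancellation produces a nonzero identity of exactly the shape \eqref{T4} with strictly smaller $M$ after one clears the common factor, contradicting minimality; if no such cancellation ever occurs, the degree equalities above turn out to be mutually incompatible. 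In either case $p_0\equiv0$, contradicting $p_0\not\equiv0$, and the proposition follows.

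I expect the third step to be the crux: organizing the degree and leading-coefficient bookkeeping so that every branch is either outright impossible or produces a strictly smaller counterexample, all while correctly carrying the (now polynomial) multiplier $\lambda$ and the degree-multiplying substitution $z\mapsto z^k$ through the estimates. A secondary, purely computational nuisance is the non-collision check in the root-orbit argument, which uses the explicit forms of $s(z)$ and $g(z)$ but nothing conceptual.
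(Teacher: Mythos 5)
Your first step is sound and essentially reproduces the paper's own normalisation: extracting the coefficient of $y^j$ gives $\lambda(z)p_j(z)=g(z)^j h_j(z)$ with $g(z)=(z-1)(z^k-1)$, forces the denominator of $\lambda$ to be constant, and yields $\lambda p_M=g^M p_0(z^k)$; the observation $\deg_y P=M-L$ and the root-correspondence idea for forcing $L=0$ are reasonable in outline (though the asserted pairwise-distinctness of the pulled-back roots is itself only an announced, not performed, computation). The genuine gap is your third step, which you yourself flag as ``the crux'': it is not an argument but a promise that ``the degree equalities turn out to be mutually incompatible'' or that ``cancellation produces a strictly smaller counterexample.'' Neither claim is substantiated, and there is a concrete reason to doubt that degree-and-leading-coefficient bookkeeping can close the proof: if $s(z)$ is replaced by $0$, the system $\lambda p_j=g^jh_j$ admits the nontrivial solution $M=2$, $p_0=1$, $p_1=0$, $p_2=(z-1)^2$, $\lambda=(z^k-1)^2$, for which every degree identity balances exactly. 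So the shape of the relations and all the degree arithmetic are perfectly consistent with a nonzero solution, and whatever kills the actual identity must exploit finer information about $s$ than its degree and top coefficient. Moreover, the mechanism by which a leading-term cancellation in some $h_j$ would manufacture a new identity of the exact form \eqref{T4} with smaller $M$ is never explained and is not plausible: the $h_j$ are tied together by the binomial expansion of $\left((z-1)(z^k-1)y+s(z)\right)^m$, and discarding a top term does not preserve that structure.

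For comparison, the paper closes the argument quite differently. After showing $\lambda(z)=\lambda\left[(z-1)(z^k-1)\right]^N$ by the same divisibility considerations you use, it iterates the substitution a second time (composing the relation at $z$ with the relation at $z^k$), compares the coefficients of $y^M$ in the doubly-iterated identity to force $N=M$, $p_M$ constant and $\lambda=1$, and then compares the coefficients of $y^{M-1}$ and evaluates the resulting one-variable identity at $z=0$, where the fact $s(0)=-1\neq 0$ produces the contradiction. That evaluation at $z=0$ --- constant-term information about $s$, not leading-term information --- is precisely the ingredient your sketch is missing, and the $s\equiv 0$ example above shows some such ingredient is unavoidable.
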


\begin{proof} Assume to the contrary, that a nontrivial collection of polynomials $p_0(z),$ $p_1(z),\ldots,p_M(z)$ satisfying \eqref{T4} exists. If the greatest common divisor of the polynomials is $p(z)$ then dividing them all by $p(z)$ we arrive at the relation \eqref{T4} for the newer normalised polynomials, but with $\lambda(z)$ replaced by $\lambda(z)p(z)/p(z^k)$. Therefore, we can assume without loss of generality that the polynomials $p_{i}(z)$ in \eqref{T4} are relatively prime. Moreover, without loss of generality we can assume that the existing polynomials all have \emph{rational} coefficients, since the identity \eqref{T4} is over the field of rationals, so that each $p_{i}(z)\in\mathbb{Q}[z]$ and $\lambda(z)\in\mathbb Q(z)$.

Assuming $\lambda(z)$ is nonzero, write $\lambda(z)=a(z)/b(z)$, where $\gcd(a(z),b(z))=1$, so that \eqref{T4} becomes
\begin{equation}
\label{ab}
a(z)\sum_{m=0}^M p_m(z) y^m = b(z)\sum_{m=0}^M p_{M-m}(z^k)\left((z-1)(z^k-1)y+s(z)\right)^{m}.
\end{equation}
It follows immediately that any polynomial $p_m(z)$ on the left-hand side of \eqref{ab} is divisible by $b(z)$, hence $b:=b(z)$ is a constant.
By substituting $x=(z-1)(z^k-1)y+s(z)$, we write \eqref{ab} as
\begin{multline*}
a(z)\sum_{m=0}^M p_m(z)\left[(z-1)(z^k-1)\right]^{M-m}(x-s(z))^m\\
=b\left[(z-1)(z^k-1)\right]^M\sum_{m=0}^M p_{M-m}(z^k)x^{m},
\end{multline*}
from which we conclude that each $p_m(z^k)$ is divisible by $a(z)/\left[(z-1)(z^k-1)\right]^{N}$ where $N$ is the highest power of $\left[(z-1)(z^k-1)\right]$ dividing $a(z)$. Since $\gcd(p_0(z^k),$ $\dots,p_M(z^k))=1$, we have that $a:=a(z)/\left[(z-1)(z^k-1)\right]^{N}$ is a constant. In summary, $$\lambda(z)=\lambda \left[(z-1)(z^k-1)\right]^{N}$$ for some $\lambda\in\mathbb Q$ and $N\in\mathbb{Z}_{\geqslant 0}$; that is,
\begin{multline}
\label{lam}
\lambda \left[(z-1)(z^k-1)\right]^{N}\sum_{m=0}^M p_m(z) y^m \\ =\sum_{m=0}^M p_{M-m}(z^k)\left((z-1)(z^k-1)y+s(z)\right)^{m}.
\end{multline}
Note that the constant $\lambda$ must be nonzero since otherwise, by substituting $z=0$ into \eqref{lam}, and noting that $s(0)=1$ for any choice of $k$, each of the $p_m(z^k)$ would have the common divisor $z$.

If we iterate the righthand side of \eqref{lam} one more time we arrive at the identity
\begin{multline}
\label{lam2}
\lambda \left[(z-1)(z^k-1)^2(z^{k^2}-1)\right]^{N}\sum_{m=0}^M p_m(z) y^m \\ =\sum_{m=0}^M p_{m}(z^{k^2})\left((z-1)(z^k-1)^2(z^{k^2}-1)y\right.\\ +\left.(z^k-1)(z^{k^2}-1)s(z)+s(z^k)\right)^{m}.
\end{multline}
The coefficients of $y^M$ on each side of \eqref{lam2} are equal. That is,
\begin{multline}
\label{NM} 
\lambda \left[(z-1)(z^k-1)^2(z^{k^2}-1)\right]^{N}p_M(z)
\\ =p_{M}(z^{k^2})\left[(z-1)(z^k-1)^2(z^{k^2}-1)\right]^M.
\end{multline}
The power of the factor $z-1$ is the same in both polynomials $p_M(z)$ and $p_M(z^{k^2})$, so that comparing the powers of the factor $z-1$ on both sides of \eqref{NM} we deduce that $N=M$. Further, comparing degrees in \eqref{NM}, we have that $p_M(z)$ is a constant and then, necessarily, $\lambda=1$. Without loss of generality, we may assume that $p_M(z)=1$ (we just shift the constant to the other polynomials by dividing if needed). 

With this information, we continue by equating the coefficients of $y^{M-1}$ on each side of \eqref{lam2} to give  
\begin{multline*}
\left[(z-1)(z^k-1)^2(z^{k^2}-1)\right]^{M}p_{M-1}(z)
\\ =p_{M}(z^{k^2})\left[(z-1)(z^k-1)^2(z^{k^2}-1)\right]^{M-1}\left((z^k-1)(z^{k^2}-1)s(z)+s(z^k)\right)\\
+p_{M-1}(z^{k^2})\left[(z-1)(z^k-1)^2(z^{k^2}-1)\right]^{M-1},
\end{multline*} 
which reduces to 
\begin{multline}
\label{M-1} 
(z-1)(z^k-1)^2(z^{k^2}-1)p_{M-1}(z)
\\ =\left((z^k-1)(z^{k^2}-1)s(z)+s(z^k)\right)
+p_{M-1}(z^{k^2}).
\end{multline} 

Finally, evaluating \eqref{M-1} at $z=0$ gives that $s(0)=0$, contradicting the fact that $s(0)=-1$. This completes the proof of the theorem.
\end{proof}

\begin{lemma}\label{omegatrans} The function $$\omega(z):=\frac{\alpha_k\Omega(z)}{\Omega(z^k)}$$ is transcendental over $\mathbb{C}(z)$.
\end{lemma}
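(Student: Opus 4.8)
The plan is to argue by contradiction, turning a hypothetical polynomial relation for $\omega$ over $\mathbb{C}(z)$ into a nontrivial solution of the identity \eqref{T4}, which is ruled out by Proposition~\ref{nonrelation}. Only the functional equation of $\omega$ will be used, so ``$\omega$ algebraic'' may be read as ``$\omega$ satisfies a nontrivial polynomial relation over $\mathbb{C}(z)$''.

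First I would extract the functional equation of $\omega$ from Proposition~\ref{Omega}. Writing $P(z)=\sum_{a=0}^{k-1}(a+1)z^a$ and $Q(z)=\sum_{a=0}^{k^2-1}z^a$, the relation there reads $\Omega(z)=\alpha_k^{-1}P(z)\Omega(z^k)+\alpha_k^{-2}Q(z)\Omega(z^{k^2})$; dividing by $\Omega(z^k)$ and substituting $\Omega(z)/\Omega(z^k)=\omega(z)/\alpha_k$ and $\Omega(z^{k^2})/\Omega(z^k)=\alpha_k/\omega(z^k)$ collapses it to the bilinear Mahler equation
$$\omega(z)\,\omega(z^k)=P(z)\,\omega(z^k)+Q(z),\qquad\text{i.e.}\qquad \omega(z^k)=\frac{Q(z)}{\omega(z)-P(z)}.$$
Here $Q(z)=(z^{k^2}-1)/(z-1)$ and $(z-1)P(z)=-s(z)$ with $s(z)$ the polynomial of Proposition~\ref{nonrelation}, so $P$ and $Q$ carry exactly the data built into \eqref{T4}. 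Since $Q\not\equiv0$ we have $\omega(z)\neq P(z)$ and $\omega(z^k)\neq0$, so $\omega(z)$ and $\omega(z^k)$ are related by a single M\"obius transformation over $\mathbb{C}(z)$; in particular $\mathbb{C}(z)(\omega(z))=\mathbb{C}(z)(\omega(z^k))$.

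Now assume $\omega$ is algebraic over $\mathbb{C}(z)$ and let $\Pi(z,y)=\sum_{m=0}^{M}p_m(z)y^m\in\mathbb{C}[z][y]$ be its primitive minimal polynomial, so $M\geq1$ and $p_M\neq0$. Because the two function fields above coincide, $\omega(z^k)$ also has degree $M$ over $\mathbb{C}(z)$; hence the degree-$M$ polynomial $\Pi(z^k,y)$, which vanishes at $\omega(z^k)$, is a scalar multiple of the minimal polynomial of $\omega(z^k)$ and is irreducible over $\mathbb{C}(z)$. Applying the $M$ distinct $\mathbb{C}(z)$-embeddings of $\mathbb{C}(z)(\omega(z))$ to $\omega(z)=(P(z)\omega(z^k)+Q(z))/\omega(z^k)$ shows that the M\"obius map $t\mapsto(P(z)t+Q(z))/t$ carries the roots of $\Pi(z^k,y)$ bijectively onto those of $\Pi(z,y)$; expanding the resulting factorisation identity and clearing denominators gives
$$\frac{p_0(z^k)}{p_M(z)}\sum_{m=0}^{M}p_m(z)\,y^m=\sum_{m=0}^{M}p_m(z^k)\,Q(z)^m\bigl(y-P(z)\bigr)^{M-m}.$$

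It remains to bring this to the normal form \eqref{T4}. Reindexing the right-hand side by $j=M-m$ and using $y-P(z)=\bigl((z-1)y+s(z)\bigr)/(z-1)$ together with $Q(z)=(z^{k^2}-1)/(z-1)$ converts the identity into one whose right-hand side is $\sum_{j=0}^{M}p_{M-j}(z^k)(z^{k^2}-1)^{M-j}\bigl((z-1)y+s(z)\bigr)^{j}$; after the polynomial substitution $y\mapsto(z^k-1)y$ and the renormalisation $\hat p_m(z):=p_m(z)(z^k-1)^m$ — under which $p_{M-j}(z^k)(z^{k^2}-1)^{M-j}=\hat p_{M-j}(z^k)$ — this becomes exactly
$$\hat\lambda(z)\sum_{m=0}^{M}\hat p_m(z)\,y^m=\sum_{m=0}^{M}\hat p_{M-m}(z^k)\bigl((z-1)(z^k-1)y+s(z)\bigr)^{m}$$
for a suitable $\hat\lambda(z)\in\mathbb{C}(z)$, with the $\hat p_m\in\mathbb{C}[z]$ not all zero because $\hat p_M=p_M(z)(z^k-1)^M\neq0$. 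Proposition~\ref{nonrelation} then forces every $\hat p_j=0$, a contradiction; hence $\omega$ is transcendental over $\mathbb{C}(z)$. The only genuinely delicate points are the identity chase of this last paragraph and the verification that $\Pi(z^k,y)$ is indeed (a scalar times) the minimal polynomial of $\omega(z^k)$ — i.e.\ that the function fields of $\omega(z)$ and $\omega(z^k)$ over $\mathbb{C}(z)$ coincide — which is what legitimises the conjugacy argument.
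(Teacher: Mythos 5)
Your overall architecture matches the paper's: derive the Mahler/M\"obius relation $\omega(z)=P(z)+Q(z)/\omega(z^k)$ from Proposition~\ref{Omega}, force a proportionality identity of the shape \eqref{T4}, and invoke Proposition~\ref{nonrelation}. Your closing identity chase (the reindexing, the substitution $y\mapsto(z^k-1)y$, the renormalisation $\hat p_m(z)=p_m(z)(z^k-1)^m$) is correct, and is in fact more explicit than the paper's, which avoids the change of variables by working with $y=\omega(z)/(z^k-1)$ from the outset.

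The gap is in how you reach the proportionality. As the paper notes immediately after Proposition~\ref{Omega}, $\Omega$ (hence $\omega$) is defined \emph{only} on the set of roots of unity of degree $k^m$; ``algebraic'' here means that a relation $\sum_m p_m(\xi)\omega(\xi)^m=0$ holds pointwise on that countable set. Such an $\omega$ is not an element of an algebraic extension of $\mathbb{C}(z)$: a minimal-degree pointwise relation need not be irreducible (at each $\xi$ the value $\omega(\xi)$ may be a root of a different irreducible factor of $\Pi(\xi,\cdot)$), there is no function field $\mathbb{C}(z)(\omega(z))$, no well-defined set of $M$ conjugates, and no $\mathbb{C}(z)$-embeddings to apply to the relation $\omega(z)=P(z)+Q(z)/\omega(z^k)$. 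Consequently the steps ``$\Pi(z^k,y)$ is irreducible and is the minimal polynomial of $\omega(z^k)$'' and ``the M\"obius map carries the roots of $\Pi(z^k,\cdot)$ bijectively onto those of $\Pi(z,\cdot)$'' are not justified in this setting. The repair is the paper's elementary dichotomy: take a pointwise relation of minimal degree $M$, transport it through the functional equation to obtain a second degree-$M$ relation in the same quantity; if the two are not proportional over $\mathbb{C}(z)$, eliminate the top coefficient to get a lower-degree relation, contradicting minimality of $M$; if they are proportional, you land exactly in \eqref{T4} and Proposition~\ref{nonrelation} finishes. Everything after the proportionality step in your write-up can then be kept as is.
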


\begin{proof} Using the functional equation for $\Omega(z)$ from Theorem \ref{Omega} we have $$\left(z-1\right)\omega(z)=(k+2)z^k+\sum_{a=0}^{k-1}z^a+\frac{z^{k^2}-1}{\omega(z^{k})}.$$ Assume to the contrary that $\omega(z)$ is algebraic, so that $\omega(z)/(z^k-1)$ is also algebraic. Then there is a nontrivial relation, over the set of roots of unity of degree $k^m$ for nonnegative integers $n$, \begin{equation}\label{y}\sum_{m=0}^M p_m(z) \left(\frac{\omega(z)}{z^k-1}\right)^m=\left.\sum_{m=0}^M p_m(z) y^m\right|_{y=\omega(z)/(z^k-1)}=0,\end{equation} where here we take $M$ to be the smallest such positive integer. Multiplying the relation by $((z^k-1)/\omega(z))^M$ we have $$\sum_{m=0}^M p_{M-m}(z) \left(\frac{z^k-1}{\omega(z)}\right)^m=\left.\sum_{m=0}^M p_{M-m}(z) (y^{-1})^m\right|_{y=\omega(z)/(z^k-1)}=0,$$ whereby sending $z\mapsto z^k$ and using the functional equation for $\omega(z)$, we have \begin{multline}\label{yzk}\sum_{m=0}^M p_{M-m}(z^k)\left(\left(z-1\right) \omega(z) -s(z)\right)^m\\ =\left.\sum_{m=0}^M p_{M-m}(z^k)\left[(z-1)(z^k-1)y -s(z)\right)^m\right|_{y={\omega(z)}/{(z^k-1)}}=0.\end{multline} If the two algebraic relations in \eqref{y} and \eqref{yzk} are not proportional, then a suitable linear combination of the two will eliminate the term $y^M$ and result in a nontrivial algebraic relation for $\omega(z)$ of degree smaller than $M$, resulting in a contradiction. On the other hand, the proportionality is not possible in view of proposition \ref{nonrelation}. This proves the lemma.
\end{proof}

\begin{lemma}\label{lem2} Let $k\geqslant 2$ be a positive integer and suppose that there are polynomials $p_0(z),\ldots,p_M(z)\in\mathbb{C}[z]$ such that $$\sum_{m=0}^M p_m(\xi) \Omega(\xi)^m\Omega(\xi^k)^{M-m}=0,$$ for any root of unity of degree $k^n$ for a nonnegative integer $n$. Then $p_m(z)=0$ for each $m=0,\ldots,M$.
\end{lemma}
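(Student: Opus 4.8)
The plan is to reduce Lemma~\ref{lem2} to Proposition~\ref{nonrelation} in the same way that Lemma~\ref{omegatrans} was proved. First I would observe that a relation of the form $\sum_{m=0}^M p_m(\xi)\Omega(\xi)^m\Omega(\xi^k)^{M-m}=0$ on all $k^n$th roots of unity is, after dividing through by $\Omega(\xi^k)^M$ (which is nonzero on these roots of unity — this should be noted, and follows since $\Omega(1)=1$ together with the recursion, or can be arranged by choosing $M$ minimal so that $p_M\not\equiv 0$), exactly a nontrivial polynomial relation of degree $M$ in the quantity $\Omega(\xi)/\Omega(\xi^k)=\omega(\xi)/\alpha_k$. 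Since $\alpha_k$ is a fixed nonzero constant, absorbing the powers of $\alpha_k^{-1}$ into the coefficients $p_m$ turns this into a degree-$M$ algebraic relation for $\omega(z)$ over $\mathbb{C}(z)$, valid on all the relevant roots of unity. Taking $M$ minimal among all such relations, we are in precisely the situation that the proof of Lemma~\ref{omegatrans} rules out.

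The key steps, in order, are: (1) homogenise — pass from the homogeneous relation in $\Omega(\xi),\Omega(\xi^k)$ to an inhomogeneous polynomial relation $\sum_{m=0}^M \tilde p_m(z) Y^m=0$ with $Y=\omega(z)/(z^k-1)$ (including the harmless extra factor $z^k-1$ exactly as in Lemma~\ref{omegatrans}), choosing $M$ least possible and $\tilde p_M\not\equiv0$; (2) reverse the coefficients and substitute $z\mapsto z^k$, using the functional equation $(z-1)\omega(z)=(k+2)z^k+\sum_{a=0}^{k-1}z^a+(z^{k^2}-1)/\omega(z^k)$ from Lemma~\ref{omegatrans} — equivalently the $\Omega$-recursion of Proposition~\ref{Omega} — to obtain a second relation $\sum_{m=0}^M \tilde p_{M-m}(z^k)\big((z-1)(z^k-1)Y+s(z)\big)^m=0$ of the same degree; (3) observe that if these two relations were not scalar multiples of one another, a suitable $\mathbb{C}(z)$-linear combination would kill the $Y^M$ term and produce a nontrivial relation for $\omega$ of degree $<M$, contradicting minimality; (4) if instead they are proportional, say with proportionality factor $\lambda(z)\in\mathbb{C}(z)$, then this is exactly the identity \eqref{T4} of Proposition~\ref{nonrelation}, whose conclusion forces every $\tilde p_j\equiv0$ — contradicting $\tilde p_M\not\equiv 0$; (5) conclude that no nontrivial relation exists, hence $p_m(z)=0$ for all $m$.

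The argument is essentially a transcription of the proof of Lemma~\ref{omegatrans}, so I do not expect a serious obstacle; the one point that needs genuine care is the bookkeeping at the homogenise/de-homogenise step — specifically checking that $\Omega(\xi^k)\neq0$ on $k^n$th roots of unity so that division is legitimate (or, if one cannot guarantee this for all such $\xi$, arguing instead with the generic statement over $\mathbb{C}(z)$ via $\omega$ and noting that a polynomial identity in $z$ that holds on the Zariski-dense set of $k^n$th roots of unity holds identically), and that the powers of $\alpha_k$ and of $(z^k-1)$ inserted to match the shape of \eqref{T4} are tracked correctly so that Proposition~\ref{nonrelation} applies verbatim. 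A secondary subtlety is making sure the passage $z\mapsto z^k$ in step~(2) does not change the degree $M$ in $Y$, which it does not because $(z-1)(z^k-1)$ is a nonzero polynomial factor. Everything else is formal.
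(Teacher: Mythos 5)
Your proposal is correct and follows essentially the same route as the paper: divide the homogeneous relation by $\Omega(\xi^k)^M$ to obtain $\sum_{m=0}^M p_m(\xi)\alpha_k^{-m}\omega(\xi)^m=0$ on the $k^n$th roots of unity, and derive a contradiction from the transcendence of $\omega(z)$. The only difference is that the paper simply invokes Lemma~\ref{omegatrans} at this point, whereas you re-run its proof (the reversal/iteration argument feeding into Proposition~\ref{nonrelation}); your extra care about $\Omega(\xi^k)\neq 0$ is a reasonable observation that the paper leaves implicit in the definition of $\omega$.
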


\begin{proof} If a relationship as stated in the lemma exists, then it implies, using the function $\omega(z)$, that there is a relationship $$\sum_{m=0}^M p_m(z)\alpha_k^{-m}\omega(z)^m=0$$ on the set of roots of unity of degree $k^n$ for nonnegative integers $n$. But this is not possible because of the transcendence of $\omega(z)$ established by Lemma \ref{omegatrans}.
\end{proof}

\begin{proof}[Proof of Theorem \ref{algindPk}] For the sake of a contradiction, assume that the theorem is false and that we have an algebraic relation
\begin{equation}
\sum_{(m_0,m_1)\in{\bf M}}p_{m_0,m_1}(z)K(z)^{m_0}K(z^k)^{m_1}=0,
\label{SF}
\end{equation}
where the set ${\bf M}$ of multi-indices $(m_0,m_1)\in\mathbb Z_{\geqslant 0}^2$
is finite and none of the polynomials $p_{m_0,m_1}(z)$ in the sum is identically zero. Without loss of generality, we can assume that the polynomial $\sum_{(m_0,m_1)\in{\bf M}}p_{m_0,m_1}(z)y_0^{m_0}y_1^{m_1}$ of three variables is irreducible.

Note that as $z\to 1^-$ along a sequence of numbers $z_0^{-k^n}$, we have 
\begin{equation*}
K(z)^{m_0}K(z^k)^{m_1}
=C_{m_0,m_1}\cdot\frac{\Omega(\xi)^{m_0}\Omega(\xi^k)^{m_1}}{
(1-z)^{(\log_k\alpha_k)(m_0+m_1)}}(1+o(1))
\end{equation*}
where $$
C_{m_0,m_1}:=C^{m_0+m_1}\alpha_k^{m_1},
$$ and $C=C(e^{-z_0/k})$ does not depend on $\xi$ or $z$, the latter chosen along the sequence of numbers $z_0^{-k^n}$.

Denote by ${\bf M'}$ the subset of all multi-indices of ${\bf M}$ for which the quantity $$\beta:=(\log_k\alpha_k)(m_0+m_1)$$ is maximal; in particular, $(m_0+m_1)$ is the same for all $(m_0,m_1)\in{\bf M'}$.

Multiplying all the terms in the sum \eqref{SF} by $(1-z)^\beta$ and letting $z\to1^-$, we deduce that
\begin{equation}
\sum_{(m_0,m_1)\in{\bf M'}}C_{m_0,m_1}\cdot p_{m_0,m_1}(\xi)\cdot\Omega(\xi)^{m_0}\Omega(\xi^4)^{m_1}=0.
\label{SF1}
\end{equation}
for any root of unity $\xi$ under consideration.
But since $m_0+m_1=M$ is constant for each $(m_0,m_1)\in{\bf M'}$, the Equation \eqref{SF1} becomes
\begin{equation*}
\sum_{(m_0,M-m_0)\in{\bf M'}}C_{m_0,M-m_0}\cdot p_{m_0,M-m_0}(\xi)\cdot\Omega(\xi)^{m_0}\Omega(\xi^4)^{M-m_0}=0
\end{equation*}
for any root of unity $\xi$ of degree $k^n$.
By Lemma~\ref{lem2}, this is only possible when $p_{m_0,M-m_0}(z)=0$ identically, a contradiction to our choice of ${\bf M}$.
\end{proof}

\section{Variations and musings on Corollary \ref{cor:asymp}}\label{sec:varcor}

Corollary \ref{cor:asymp} follows directly from a very recent result of ours; see the proof of Theorem 2 in \cite{Cmahleig} and note therein that in this case $m_\kappa=0$ since the associated eigenvalues are all nonzero and distinct.

For the remainder of this section, we focus on the case $k=2$ of $\kappa(n)$. To highlight this, we will use the slightly modified notation $\kappa_2(n)$. Using the matrix definition of $\kappa_2(n)$ one can quickly see that both $$\max_{a\in[2^{n-1},2^{n})}\kappa_2(a)=\kappa_2(2^{n}-1)$$
 and $$\limsup_{n\to\infty}\frac{\kappa_2(n)}{n^{\log_2 (1+\sqrt{2})}}=\frac{2+\sqrt{2}}{4},$$ where as before $\log_2$ indicates the base-$2$ logarithm.
 
The graph of the function $\kappa_2(n)$ is not so enlightening, but the graph of the partial sums is an entirely different matter. Recall that Corollay \ref{cor:asymp} give that the partial sums $\sum_{n\leqslant N}\kappa_2(n)$ are bounded above and below by constant multiples of $N^2$. It is also quite clear, using the theory of regular sequences, that the values of $N^{-2}\sum_{n\leqslant N}\kappa_2(n)$ are periodic between powers of $2$---see Figure \ref{k2sum1516} for a graph of the period in the interval $[2^{15},2^{16})$.

\begin{figure}[htp]
\includegraphics[width=4.7in,height=2.5in]{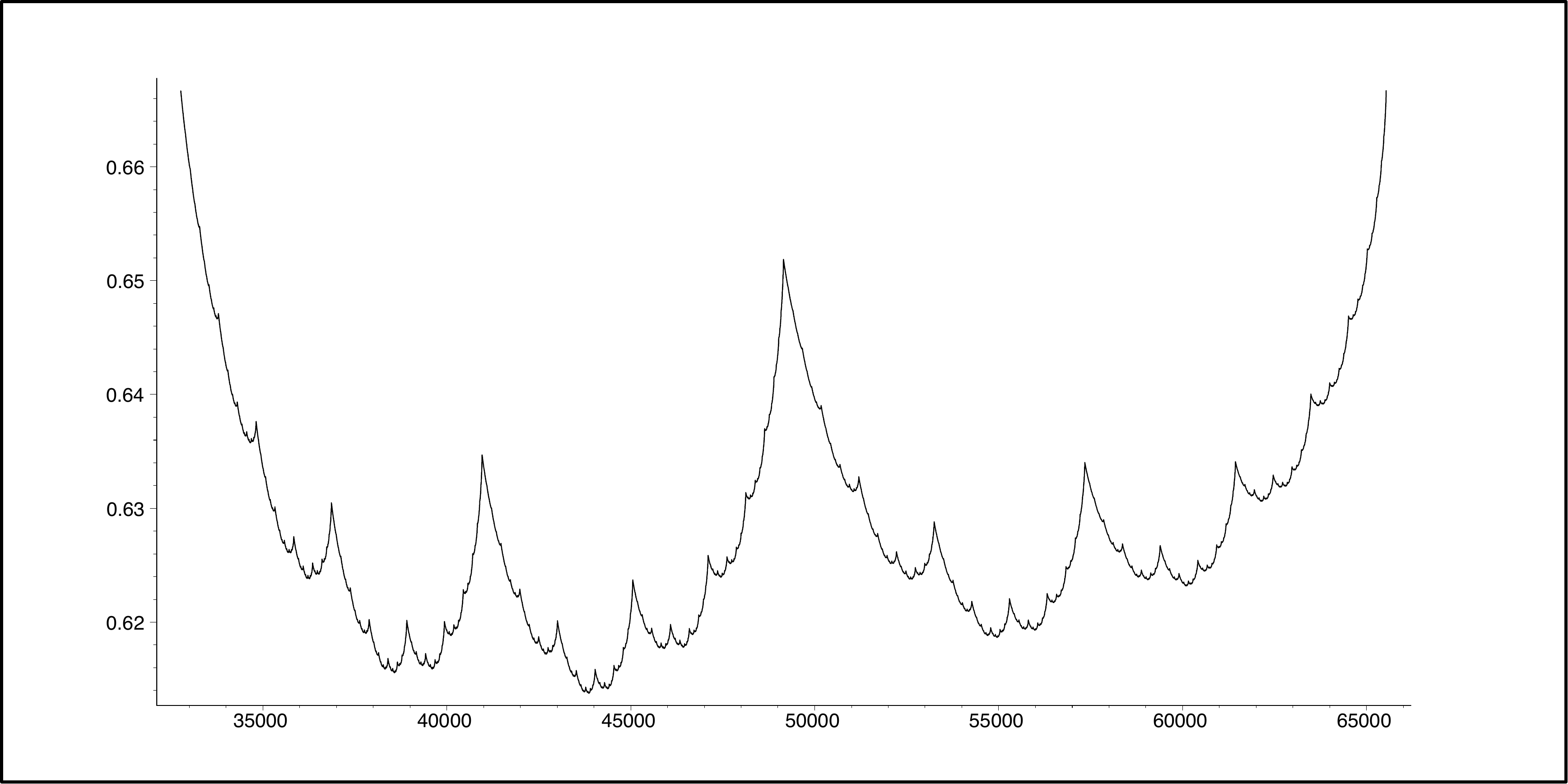}
\caption{The values of $N^{-2}\sum_{n\leqslant N}\kappa_2(n)$ (for $k=2$) with $N\in[2^{15}, 2^{16}]$.}
\label{k2sum1516}
\end{figure}

Figure \ref{k2sum1516} elicits---at least for this author---an immediate feeling of {\em d\'ej\`a vu}. Indeed, scouring back through the literature, one comes across the Takagi function, defined on the unit interval $[0,1]$ by $$\tau(x):=\sum_{n\geqslant 0}\frac{1}{2^n}\|2^nx\|,$$ where $\|y\|$ denotes the distance from $y$ to the nearest integer. For a detailed survey of the Takagi function see the paper of Lagarias \cite{L2012}. The Takagi function is plotted in Figure \ref{takagi}.

\begin{figure}[htp]
\includegraphics[width=4.7in,height=2.5in]{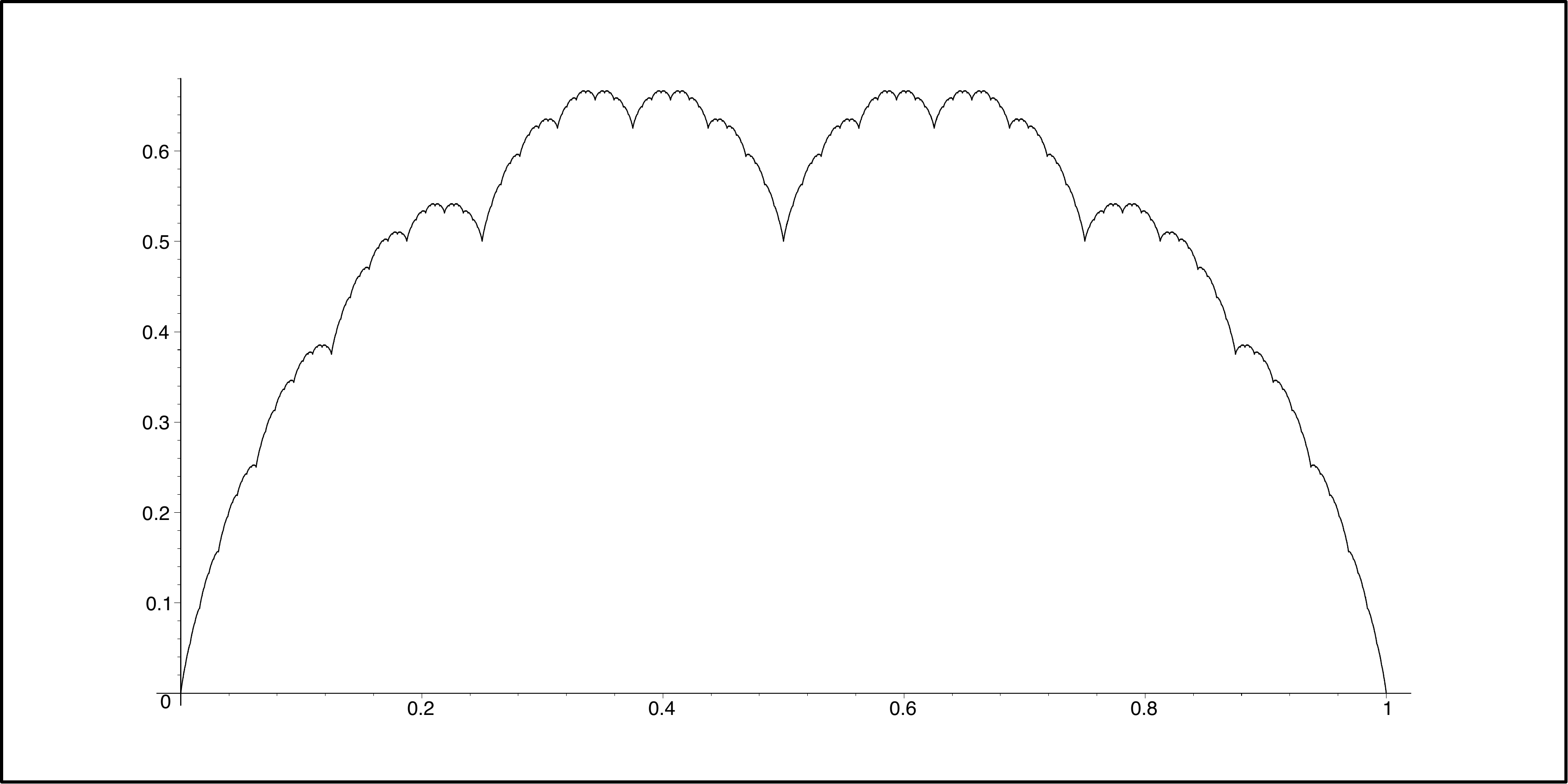}
\caption{The Takagi function $\tau(x)$.}
\label{takagi}
\end{figure}

An explicit connection between these two pictures (and functions) is not immediately evident. We leave the reader with the very vague question of determining if there is a describable relationship between these functions. We find the similarities too compelling to not warrant further study.\\

\noindent{\bf Acknowledgements.} Parts of this paper during visits to both the Erwin Schr\"odinger Institute  (ESI) for Mathematics and Physics during the workshop on ``Normal Numbers: Arithmetic, Computational and Probabilistic Aspects'' and the Alfr\'ed R\'enyi Institute of Mathematics of the Hungarian Academy of Sciences. The author thanks both the ESI and the R\'enyi Institute for their support. The author also thanks Wadim Zudilin for many stimulating conversations and constructive suggestions.

\bibliographystyle{amsplain}

\begin{thebibliography}{10}

\bibitem{AS1992}
Jean-Paul Allouche and Jeffrey Shallit, \emph{The ring of {$k$}-regular
  sequences}, Theoret. Comput. Sci. \textbf{98} (1992), no.~2, 163--197.
  \MR{1166363 (94c:11021)}

\bibitem{pgB1994}
Paul-Georg Becker, \emph{{$k$}-regular power series and {M}ahler-type
  functional equations}, J. Number Theory \textbf{49} (1994), no.~3, 269--286.
  \MR{1307967}

\bibitem{BC2017}
Jason~P. Bell and Michael Coons, \emph{Transcendence tests for {M}ahler
  functions}, Proc. Amer. Math. Soc. \textbf{145} (2017), no.~3, 1061--1070.
  \MR{3589306}

\bibitem{BvdPSZ}
Jonathan Borwein, Alf van~der Poorten, Jeffrey Shallit, and Wadim Zudilin,
  \emph{Neverending {F}ractions}, Australian Mathematical Society Lecture
  Series, vol.~23, Cambridge University Press, Cambridge, 2014, An introduction
  to continued fractions. \MR{3468515}

\bibitem{BK2014}
Jean Bourgain and Alex Kontorovich, \emph{On {Z}aremba's conjecture}, Ann. of
  Math. (2) \textbf{180} (2014), no.~1, 137--196. \MR{3194813}

\bibitem{BCZ2016}
Richard~P. Brent, Michael Coons, and Wadim Zudilin, \emph{Algebraic
  independence of {M}ahler functions via radial asymptotics}, Int. Math. Res.
  Not. IMRN (2016), no.~2, 571--603. \MR{3493426}

\bibitem{Cmahleig}
Michael Coons, \emph{An asymptotic approach in {M}ahler's method}, preprint,
  available online at {\tt https://arxiv.org/pdf/1511.07534.pdf}.

\bibitem{CTbams}
Michael Coons and Yohei Tachiya, \emph{Transcendence over meromorphic
  functions}, Bull. Aust. Math. Soc., to appear.

\bibitem{H2015}
ShinnYih Huang, \emph{An improvement to {Z}aremba's conjecture}, Geom. Funct.
  Anal. \textbf{25} (2015), no.~3, 860--914. \MR{3361774}

\bibitem{L2012}
Jeffrey~C. Lagarias, \emph{The {T}akagi function and its properties}, Functions
  in number theory and their probabilistic aspects, RIMS K\^oky\^uroku
  Bessatsu, B34, Res. Inst. Math. Sci. (RIMS), Kyoto, 2012, pp.~153--189.
  \MR{3014845}

\bibitem{N1990}
Kumiko Nishioka, \emph{New approach in {M}ahler's method}, J. Reine Angew.
  Math. \textbf{407} (1990), 202--219. \MR{1048535 (91e:11083)}

\bibitem{Z1972}
S.~K. Zaremba, \emph{La m\'ethode des ``bons treillis'' pour le calcul des
  int\'egrales multiples},  (1972), 39--119. \MR{0343530}

\end{thebibliography}
\providecommand{\bysame}{\leavevmode\hbox to3em{\hrulefill}\thinspace}
\providecommand{\MR}{\relax\ifhmode\unskip\space\fi MR }
\providecommand{\MRhref}[2]{%
  \href{http://www.ams.org/mathscinet-getitem?mr=#1}{#2}
}
\providecommand{\href}[2]{#2}


\end{document}